\numberwithin{equation}{section}
\title[The K\"ahler cone and the balanced cone]
{Relations between the K\"ahler cone and the balanced cone of  a K\"ahler manifold}
\author{Jixiang Fu}
\author{Jian Xiao}
\address{Institute of Mathematics\\ Fudan University \\ Shanghai
200433, China}
\email{majxfu@fudan.edu.cn}
\email{jxiao10@fudan.edu.cn}
\newtheorem{prop}{Proposition}[section]
\newtheorem{theo}[prop]{Theorem}
\newtheorem{lemm}[prop]{Lemma}
\newtheorem{coro}[prop]{Corollary}
\newtheorem{rema}[prop]{Remark}
\newtheorem{defi}[prop]{Definition}
\newtheorem{conj}[prop]{Conjecture}
\begin{document}
\begin{abstract}
In this paper, we consider a natural map from the K\"ahler cone of a
compact K\"ahler manifold  to its balanced cone. We study its
injectivity and surjectivity. We also give an analytic
characterization theorem on a nef class being K\"ahler.
\end{abstract}
\maketitle

\section{Introduction}
On a complex $n$-dimensional manifold, a {\sl balanced metric} is a Hermitian metric such that its
associated fundamental form $\omega$ satisfies $d(\omega^{n-1})=0$.
Throughout this paper such an $\omega$ is called a balanced metric directly. It is easy to see that the existence of a balanced metric $\omega$ is equivalent to the
existence of a $d$-closed strictly positive $(n-1,n-1)$-form
$\Omega$ with the  relation  $\Omega=\omega^{n-1}$ (see \cite{M82}). Hence,
for convenience, each such $\Omega$ will also be called a balanced
metric.

Assume that $X$ is a compact complex manifold. The (real)
$(p,p)$-th Bott-Chern cohomology group of $X$ is defined as
$$H^{p,p}_{BC}(X,\mathbb R)=\{\textup{real $d$-closed
$(p,p)$-forms}\}/i\partial\bar\partial\{\textup {real
$(p-1,p-1)$-forms}\},$$
which can also be defined by currents. Its elements will be denoted by
$[\cdot]_{bc}$. It is easy to see that the cohomology classes of all real $(n-1,n-1)$-forms which are balanced metrics form an open convex cone in
$H^{n-1,n-1}_{BC}(X,\mathbb{R})$. We denote it by
\begin{eqnarray*}
  \mathcal{B}&=&  \{[\Omega]_{bc}\in H^{n-1,n-1}_{BC}(X,\mathbb{R})\ |\ \Omega \ \text{is a balanced
  metric}\}.
\end{eqnarray*}
It is called the {\sl balanced cone} of $X$. Note that the zero cohomology
class may be in  $\mathcal B$. For example, Fu-Li-Yau
\cite{FLY08} constructed a balanced metric $\omega$ on the connected
sum $Y$ of $k(\geq 2)$ copies of $S^3\times S^3$. Since
$H^{2,2}_{BC}\bigl( Y,\mathbb R\bigr)=0$,
$[\omega^{2}]_{bc}=0\in\mathcal B$. Clearly, if the zero class belongs to $\mathcal B$, then
$\mathcal B=H^{n-1,n-1}_{BC}(X,\mathbb R)$. However, if $X$ is
a compact K\"ahler manifold, then the zero class is never in $\mathcal B$.

Now we assume that $X$ is a compact K\"ahler manifold. In this case,
by the $\partial\bar\partial$-lemma, it is well known that
$H^{p,p}_{BC}(X,\mathbb R)$ is the same as  the cohomology group
$H^{p,p}_{dR}(X,\mathbb R)$, the set of de Rham classes
represented by a real $d$-closed $(p,p)$-form, see \cite{V02}. The
K\"ahler cone $\mathcal{K}$ of $X$ is defined to be
\begin{eqnarray*}
\mathcal{K} &=& { \{[\omega]\in
H^{1,1}_{dR}(X,\mathbb{R})\ |\ \omega\ \text{is a K\"ahler metric}\}},
\end{eqnarray*}
which  is  an open convex cone in $H^{1,1}_{dR}(X,\mathbb R)$. It  was studied thoroughly by Demailly and Paun in
\cite{DP04}. Since on a K\"ahler surface, the balanced cone and the
K\"ahler cone coincide by their definitions, we will always assume
 $n\geq 3$ in the rest of the paper.

The balanced cone $\mathcal{B}$ of a compact K\"ahler manifold is related to its movable cone $\mathcal{M}$ (cf. \cite{BDPP13} for its definition).
The first named author learned this notion from Professor Demailly, who mentioned
Toma's paper \cite{MT09}. Toma observed that every movable curve on a projective manifold can be represented by a balanced metric under the assumption $\mathcal{E}^ \vee =\overline{\mathcal{M}}$. This assumption is Conjecture 2.3 in \cite{BDPP13}. In fact, the result in \cite{MT09} holds for all movable classes on any compact K\"ahler manifold. And along the lines of \cite{MT09}, one can obtain
the equivalence of $\mathcal{B}$ and $\mathcal{M}$ under the assumption $\mathcal{E}^ \vee =\overline{\mathcal{M}}$ (see the appendix).
\vspace{2mm}

In this note, motivated by papers \cite{FWW10,FWW11}, we consider the map
$$\mathbf b:\mathcal K\to\mathcal B $$
which maps $[\omega]$ to $[\omega^{n-1}]$.
It is clearly well-defined and
can be extended to the map
$$\overline {\mathbf b}:\overline{\mathcal K}\to\overline{\mathcal B},$$
where $\overline{\mathcal K}$ and $\overline{\mathcal{B}}$ are the closures of the corresponding cones.
We want to study
the properties of $\mathbf b$ and $\overline{\mathbf b}$.  We will first prove  that
$\mathbf b$  embeds $\mathcal{K}$ into $\mathcal{B}$.
\begin{prop}
\label{injective map} Let $X$ be a compact K\"ahler manifold. Then
the map $\mathbf b$ is injective.
\end{prop}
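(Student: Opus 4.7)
The plan is to reduce the injectivity of $\mathbf{b}$ to the equality case of the Khovanskii--Teissier inequality for K\"ahler classes, and ultimately to the Hodge--Riemann bilinear relations.

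Suppose $[\omega_1],[\omega_2]\in\mathcal{K}$ satisfy $[\omega_1^{n-1}]=[\omega_2^{n-1}]$ in $H^{n-1,n-1}_{BC}(X,\mathbb{R})$. First I would pair this identity against $[\omega_1]$ and then against $[\omega_2]$ using the duality $H^{1,1}\times H^{n-1,n-1}\to H^{n,n}\cong\mathbb{R}$. This yields
$$\int_X\omega_1^n=\int_X\omega_1\wedge\omega_2^{n-1},\qquad \int_X\omega_2^n=\int_X\omega_2\wedge\omega_1^{n-1}.$$

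Next I would invoke the Khovanskii--Teissier inequality
$$\Bigl(\int_X\alpha\wedge\beta^{n-1}\Bigr)^n\ \geq\ \Bigl(\int_X\alpha^n\Bigr)\Bigl(\int_X\beta^n\Bigr)^{n-1}$$
for K\"ahler classes $\alpha,\beta$, applied first with $(\alpha,\beta)=([\omega_1],[\omega_2])$ and then with the roles reversed. Substituting the two identities above converts these into $\int_X\omega_1^n\geq\int_X\omega_2^n$ and $\int_X\omega_2^n\geq\int_X\omega_1^n$ respectively, forcing equality in both applications of Khovanskii--Teissier.

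The equality case---a consequence of the Hodge--Riemann bilinear relations, in the form that the quadratic form $\gamma\mapsto\int_X\gamma^2\wedge\omega^{n-2}$ is negative definite on the $\omega$-primitive subspace of $H^{1,1}(X,\mathbb{R})$---then forces $[\omega_1]$ and $[\omega_2]$ to be proportional. Writing $[\omega_1]=c[\omega_2]$ with $c>0$ and substituting back gives $(c^{n-1}-1)[\omega_2^{n-1}]=0$; since $[\omega_2]\in\mathcal{K}$ one has $\int_X\omega_2^n>0$, so $[\omega_2^{n-1}]\neq 0$ and therefore $c=1$.

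The main difficulty I anticipate is citing or verifying the equality case of Khovanskii--Teissier in the K\"ahler (as opposed to projective) setting; it follows by iterating the Hodge index-type inequality derived from the Hodge--Riemann relations on $H^{1,1}$, but a careful reference is needed. Note that strict positivity of $\omega_1,\omega_2$ is essential throughout, so this argument does not immediately yield injectivity of $\overline{\mathbf{b}}$ on the closure $\overline{\mathcal{K}}$.
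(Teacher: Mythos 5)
Your argument is correct in outline, but it takes a genuinely different route from the paper. The paper's proof is analytic: it uses Yau's theorem to normalize $\omega_2$ within its class so that $\tilde\omega_2^{\,n}=c\,\omega_1^n$, then applies the pointwise AM--GM inequality to $\det(\omega_1^{n-1}+i\partial\bar\partial\phi)/\det\omega_1^{n-1}$ and integrates against the K\"ahler form $\omega_1$ to force $c=1$ and $i\partial\bar\partial\phi=0$, so that the normalized representatives coincide pointwise. Your proof is purely cohomological: pairing $[\omega_1^{n-1}]=[\omega_2^{n-1}]$ with $[\omega_1]$ and $[\omega_2]$, invoking Khovanskii--Teissier in both orders to get $\int_X\omega_1^n=\int_X\omega_2^n$ with equality in the inequality, and then using the equality case to conclude proportionality. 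The one step you rightly flag as needing care --- the equality case of Khovanskii--Teissier for K\"ahler classes --- does go through along the lines you indicate: setting $s_k=\int_X\omega_1^k\wedge\omega_2^{n-k}$, the mixed Hodge-index inequalities of Gromov and Demailly (the same ``convexity inequality'' the paper cites for Theorem 1.2) give log-concavity of $(s_k)$; your equalities $s_0=s_1=s_{n-1}=s_n$ then force all $s_k$ equal, and the single equality $s_{n-1}^2=s_{n-2}s_n$ combined with the classical Hodge--Riemann relations for the form $\int_X(\cdot)\wedge(\cdot)\wedge\omega_1^{n-2}$ (negative definite on $\omega_1$-primitive classes) yields $[\omega_2]=\lambda[\omega_1]$, whence $\lambda=1$. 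It is worth noting that Remark 2.2 of the paper explicitly says the authors do not know whether an algebraic proof of Proposition 1.1 exists; your argument essentially supplies one, so spelling out the equality-case chain above would be worthwhile. What the paper's analytic approach buys in exchange is robustness: it degenerates (via the Monge--Amp\`ere theory of big classes) to nef and big classes in Theorem 1.2, whereas extending your route there runs into Teissier's proportionality problem for merely nef and big classes, which is substantially harder --- your closing caveat about $\overline{\mathbf b}$ is exactly right.
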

The proof of the above proposition contains two key ingredients. The first one is Yau's celebrated theorem on
complex Monge-Amp\`ere equations over compact K\"ahler manifolds, and the second one is the
Arithmetric Mean-Geometric Mean (AM-GM) inequality. Replacing Yau's theorem by
Boucksom-Eyssidieux-Guedj-Zeriahi's theorem \cite{BEGZ} on
complex Monge-Amp\`ere equations in big cohomology classes, we can generalize the above proposition on the map $\mathbf b$ to the map $\overline{\mathbf b}$. Here we recall that in
the K\"ahler case a cohomology class $[\alpha]\in
H^{1,1}_{dR}(X,\mathbb R)$ is nef if $[\alpha]\in \overline{\mathcal
K}$, and $[\alpha]$ is big if $[\alpha]$ contains a K\"ahler current. For compact K\"ahler manifolds, Demailly and Paun \cite{DP04} proved that a nef class $[\alpha]$ is big if and only if $\int_X\alpha^n>0$. In order to generalize the above proposition, we also need a convexity inequality
obtained by  Gromov \cite{Gro90} and Demailly \cite{Dem93}, and use some techniques on currents.
\begin{theo}
\label{inj_thm}
Let $X$ be an $n$-dimensional compact K\"ahler manifold. Then the map
$\overline {\mathbf b}$ is
injective when $\overline {\mathbf b}$ is restricted to the subcone generated by all the nef and big classes.
\end{theo}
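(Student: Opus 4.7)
The strategy is to follow the proof of Proposition~\ref{injective map} at the level of closed positive currents: replace Yau's theorem by the Boucksom-Eyssidieux-Guedj-Zeriahi (BEGZ) solution of the Monge-Amp\`ere equation in nef and big classes, interpret the AM-GM step through the non-pluripolar product, and accommodate the singularities of minimal-singularity potentials by working first on the ample locus. Let $[\alpha_1],[\alpha_2]$ be nef and big with $[\alpha_1^{n-1}]_{bc}=[\alpha_2^{n-1}]_{bc}$; by Demailly-Paun $\int_X\alpha_i^n>0$, and pairing the hypothesis with $[\alpha_j]$ gives the cross-term identities
\begin{equation*}
\int_X\alpha_1^n=\int_X\alpha_1\wedge\alpha_2^{n-1},\qquad \int_X\alpha_2^n=\int_X\alpha_2\wedge\alpha_1^{n-1}.
\end{equation*}

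Fix a K\"ahler form $\omega$ on $X$. Using BEGZ, for each $i$ I would produce a closed positive $(1,1)$-current $T_i=\alpha_i+i\partial\bar\partial\varphi_i$ with minimal singularities in $[\alpha_i]$ whose non-pluripolar Monge-Amp\`ere measure equals the prescribed smooth positive form
\begin{equation*}
\langle T_i^n\rangle=\frac{\int_X\alpha_i^n}{\int_X\omega^n}\,\omega^n.
\end{equation*}
Each $T_i$ is smooth on its ample locus $\mathrm{Amp}(\alpha_i)$, and the non-pluripolar mixed products $\langle T_1\wedge T_2^{n-1}\rangle$ are well-defined positive measures that put no mass on pluripolar sets. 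The convexity inequality of Gromov-Demailly, combined with properties of non-pluripolar products for nef and big classes, gives the crucial identification
\begin{equation*}
\int_X\langle T_i\wedge T_j^{n-1}\rangle=\int_X\alpha_i\wedge\alpha_j^{n-1},
\end{equation*}
so that non-pluripolar masses recover cohomological intersection numbers.

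On the Zariski open set $U:=\mathrm{Amp}(\alpha_1)\cap\mathrm{Amp}(\alpha_2)$ both $T_1,T_2$ are smooth K\"ahler forms, so the pointwise AM-GM inequality
\begin{equation*}
T_1\wedge T_2^{n-1}\geq (T_1^n)^{1/n}(T_2^n)^{(n-1)/n}
\end{equation*}
holds on $U$, with equality at a point iff $T_1$ and $T_2$ agree there. Integrating against the non-pluripolar measure on $X$, substituting the prescribed Monge-Amp\`ere densities and the cross-term identities, and then playing the symmetric version (with the roles of $T_1,T_2$ exchanged) against it exactly as in the proof of Proposition~\ref{injective map}, one simultaneously forces $\int_X\alpha_1^n=\int_X\alpha_2^n$ and equality in AM-GM pointwise on $U$. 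Hence $T_1=T_2$ as smooth forms on $U$.

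The main obstacle is the final step: upgrading the pointwise identity $T_1=T_2$ on $U$ to the equality of Bott-Chern classes $[\alpha_1]=[\alpha_2]$ in $H^{1,1}_{BC}(X,\mathbb R)$. The complement $X\setminus U$ lies in a proper analytic subset, and a priori a closed $(1,1)$-current may carry mass there (integration currents along divisors being the model example), so the pointwise identity on $U$ does not automatically extend. Here the techniques on closed positive currents become essential: one exploits that $T_i$ has minimal singularities in a nef and big class, and in particular has vanishing generic Lelong numbers, together with Siu-type decomposition, to conclude that the difference $T_1-T_2$ vanishes globally as a current on $X$. This gives $\alpha_1-\alpha_2=i\partial\bar\partial(\varphi_2-\varphi_1)$ and hence $[\alpha_1]=[\alpha_2]$. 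Controlling the behaviour of the minimal-singularity currents across $X\setminus U$ is the principal technical content of the argument.
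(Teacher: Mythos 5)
Your proposal is correct, and its skeleton---equal volumes via the Gromov--Demailly convexity inequality, BEGZ minimal-singularity solutions of the Monge--Amp\`ere equation, pointwise AM--GM equality on $\mathrm{Amp}(\alpha_1)\cap\mathrm{Amp}(\alpha_2)$, then the support theorem plus vanishing Lelong numbers to kill the difference of currents across the bad set---is exactly the paper's. The one genuine divergence is in the middle. The paper never integrates the pointwise AM--GM against a non-pluripolar measure: it solves smooth Monge--Amp\`ere equations in the K\"ahler classes $[\alpha+t\omega]$ and $[\beta+t\omega]$, applies AM--GM at each $t>0$, extracts a weak limit $\mu_0$ of the measures $\beta_t\wedge\Theta_t$ as $t\to 0$, shows $\mu_0\geq 0$ with $\int_X\mu_0=0$ so $\mu_0=0$, and identifies this limit with the smooth form $\beta_0\wedge\Theta_0$ on the common ample locus via $C^\infty_{\mathrm{loc}}$ convergence. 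You replace that entire approximation argument by the single identity $\int_X\langle T_1\wedge T_2^{n-1}\rangle=\int_X\alpha_1\wedge\alpha_2^{n-1}$ for minimal-singularity currents in nef and big classes. That identity is true, but it is not a consequence of the convexity inequality as you suggest; it is a separate theorem of \cite{BEGZ} (positive intersection products of nef and big classes coincide with ordinary intersection numbers), and you must invoke it as such. Granting it, your route is shorter and cleaner; the paper's is more self-contained. Two small points to tighten: equality in your pointwise AM--GM gives $T_1=\lambda T_2$ at a point, and you need the equal prescribed densities $T_1^n=T_2^n$ (available only after the volume equality) to conclude $\lambda=1$; and in the last step what is needed is Boucksom's vanishing of \emph{all} Lelong numbers of minimal-singularity currents in nef and big classes, applied after writing $T_1-T_2=\sum c_j[D_j]$ modulo a codimension-$\geq 2$ part that the support theorem kills, so that evaluating Lelong numbers at a generic point of each $D_j$ forces $c_j=0$.
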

We remark that the condition
``big" is necessary, otherwise, the complex torus $T^n$ gives a counterexample.  But it is not clear whether the condition ``nef" is necessary.
\vspace{2mm}

In general $\mathbf b$ is not surjective. In fact, we will show that
$\overline{\mathbf b}(\partial\mathcal{K})\cap \mathcal{B}$ need not
to be empty. Let
$$\mathcal{K}_{NS}=\mathcal{K}\cap
NS_{\mathbb{R}},$$
where $NS_{\mathbb R}$ is the real Neron-Severi
group of $X$, i.e.,
$$NS_{\mathbb{R}}=\bigl(H^{1,1}_{BC}(X,\mathbb R)\cap H^2(X,\mathbb Z)_{\textup{free}}\bigr)\otimes _{\mathbb Z}\mathbb R.$$
Then, if $X$ is a projective Calabi-Yau manifold (i.e. a projective
manifold with $c_1=0$), we can characterize when a nef class
$[\alpha]\in \partial \mathcal K_{NS}$ can be mapped into
$\mathcal B$ by $\overline{\mathbf b}$.  In fact, inspired by the method in
\cite{MT09} and \cite{Sul76}, we can give some sufficient conditions when a $d$-closed
nonnegative $(n-1,n-1)$-form is a balanced class.
Applying these criteria to Proposition 4.1 in \cite{T07},
we obtain
\begin{theo}\label{nefb}
\label{PCY} Let $X$ be a projective Calabi-Yau manifold. If $[\alpha]
\in\partial\mathcal{K}$, then $\overline{\mathbf b}([\alpha])\in
\mathcal{B}$ implies that $[\alpha]$ is a big class. On the other hand, if
$[\alpha]\in \partial\mathcal{K}_{NS}$ is a big class,
then $\overline{\mathbf b}([\alpha])\in \mathcal{B}$ if and
only if the exceptional set $Exc(F_{[\alpha]})$ of the contraction
map $F_{[\alpha]}$ induced by $[\alpha]$ is of codimension greater than one, i.e., $F_{[\alpha]}$ is a flipping contraction.
\end{theo}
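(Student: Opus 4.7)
The plan is to decompose the theorem into three implications, all united by the duality between the balanced cone $\mathcal B$ and the pseudoeffective $(1,1)$-cone. Note first that since $\mathcal B$ does not contain the zero class on a compact K\"ahler manifold, the hypothesis $\overline{\mathbf b}([\alpha])\in\mathcal B$ automatically forces $[\alpha]\neq 0$.

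For the implication ``$[\alpha]\in\partial\mathcal K$ and $\overline{\mathbf b}([\alpha])\in\mathcal B$ imply $[\alpha]$ is big'', let $\Omega$ be a balanced representative of $\overline{\mathbf b}([\alpha])$. The Bott--Chern pairing gives
\[
\int_X\alpha^n \;=\; [\alpha]\cdot[\alpha^{n-1}]_{bc} \;=\; [\alpha]\cdot[\Omega]_{bc}.
\]
Since $[\alpha]\in\overline{\mathcal K}\setminus\{0\}$ is nef and nonzero, it is pseudoeffective on the K\"ahler manifold $X$, hence admits a nonzero closed positive $(1,1)$-current representative $T$. The right-hand side is then $\int_X T\wedge\Omega$, which is strictly positive because $\Omega$ is smooth and strictly positive (so $\Omega\ge c\,\omega_0^{n-1}$ for some fixed K\"ahler form $\omega_0$ and some $c>0$) while $T$ has positive total mass. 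Therefore $\int_X\alpha^n>0$, i.e., $[\alpha]$ is big.

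Next, assume $[\alpha]\in\partial\mathcal K_{NS}$ is big and address the biconditional ``$\overline{\mathbf b}([\alpha])\in\mathcal B$ iff $F:=F_{[\alpha]}$ is a flipping contraction''. For ``$\Rightarrow$'' I argue contrapositively: if $F$ contracts an irreducible divisor $E$, then $\dim F(E)\le n-2$, and Kawamata base-point-freeness applied to the nef and big class $[\alpha]$ on the projective Calabi--Yau $X$ yields a representative $\alpha=F^\ast H$ for some ample $H$ on $Y$. Since $H^{n-1}|_{F(E)}$ vanishes for degree reasons, so does $(\alpha^{n-1})|_E$, hence
\[
[\alpha^{n-1}]_{bc}\cdot[E] \;=\; \int_E(\alpha^{n-1})|_E \;=\; 0.
\]
On the other hand, if $\Omega$ were a balanced representative of $[\alpha^{n-1}]_{bc}$, the same Bott--Chern pairing would equal $\int_E\Omega|_E>0$, since $\Omega|_E$ is a strictly positive top-degree form on the $(n-1)$-dimensional subvariety $E$, contradicting the previous equation. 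Hence $F$ contracts no divisor. For ``$\Leftarrow$'', with $\mathrm{Exc}(F)$ of codimension $\ge 2$, I apply Proposition~4.1 of [T07] to produce a smooth closed $(n-1,n-1)$-form $\Theta$ in $[\alpha^{n-1}]_{bc}$ that is everywhere $\ge 0$ and strictly positive on $X\setminus\mathrm{Exc}(F)$. To deduce $[\Theta]_{bc}\in\mathcal B$ I invoke a Sullivan--Lamari-type duality, developed along the lines of [Sul76, MT09]: a class in $H^{n-1,n-1}_{BC}(X,\mathbb R)$ lies in $\mathcal B$ iff it pairs strictly positively with every nonzero closed positive $(1,1)$-current $T$. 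Given such a $T$, perform the Siu decomposition $T=\sum_i\nu_i[D_i]+R$. Each $D_i$ is of codimension one and so cannot be contained in the codimension-$\ge 2$ set $\mathrm{Exc}(F)$, yielding $\nu_i\int_{D_i}\Theta|_{D_i}>0$. For the residual $R$ (no divisorial Lelong component), the Bassanelli--Siu support theorem forbids a nonzero closed positive $(1,1)$-current from being supported on any codimension-$\ge 2$ set, so if $R\ne 0$ it has positive mass on $X\setminus\mathrm{Exc}(F)$, where $\Theta>0$, giving $\int_X\Theta\wedge R>0$. Summing, $\int_X\Theta\wedge T>0$, proving $[\Theta]_{bc}\in\mathcal B$.

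The principal obstacle is the ``$\Leftarrow$'' direction: formulating and verifying the Sullivan--Lamari duality criterion for a nonnegative $(n-1,n-1)$-form with codimension-$\ge 2$ degeneracy locus to represent a balanced class, and then correctly treating the residual, non-divisorial part of the Siu decomposition via a support theorem. The other two implications are essentially routine once one uses the right test objects: Bott--Chern pairings against integration currents $[E]$ handle the divisorial obstruction, and positivity of the pairing between balanced classes and nonzero pseudoeffective classes gives the bigness implication.
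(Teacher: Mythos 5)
Your first implication (balancedness forces bigness) and the ``$\Rightarrow$'' half of the biconditional follow essentially the paper's route: pair the class against a positive current in $[\alpha]$, respectively against the integration current $[E]$ of a contracted divisor, and use that a balanced metric pairs strictly positively with any nonzero positive $(1,1)$-current. (One small point you gloss over: to get $\dim F(E)\le n-2$ for a codimension-one component $E$ of $Exc(F)$ you need that the image of the semiample fibration is normal, so that Zariski's Main Theorem forces positive-dimensional fibers through every point of the exceptional set; the paper supplies exactly this argument.)

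The genuine gap is in your ``$\Leftarrow$'' direction, in the duality criterion you invoke. You assert that a class in $H^{n-1,n-1}_{BC}(X,\mathbb R)$ is balanced as soon as it pairs strictly positively with every nonzero \emph{$d$-closed} positive $(1,1)$-current, and you then run a Siu decomposition of such a current. But the Hahn--Banach separation that produces a balanced metric requires the separating $(n-1,n-1)$-form to be $d$-closed, which forces the annihilated subspace to contain all currents of the form $\partial\bar S+\bar\partial S$; consequently the hypothesis must be strict positivity against every nonzero \emph{$\partial\bar\partial$-closed} positive $(1,1)$-current --- a strictly larger test cone living in Aeppli cohomology (this is the paper's Lemma 3.4 and Remark 3.5, $\mathcal E_{dd^c}^\vee=\overline{\mathcal B}$). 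The statement that every $\partial\bar\partial$-closed positive $(1,1)$-current is Aeppli-equivalent to a $d$-closed one, which would let you reduce to your criterion, is precisely the conditional content of the paper's appendix: it is proved there only under BDPP's Conjecture $\mathcal E^\vee=\overline{\mathcal M}$. So as written your criterion amounts to the unproven identification $\mathcal B=\mathcal M$, and the Siu decomposition (a tool for $d$-closed currents) is not applicable to the currents you actually must test. The repair is straightforward and in fact shorter than your argument: since the Tosatti representative $\Theta$ is semipositive and strictly positive off the codimension-$\ge 2$ set $Exc(F)$, any $\partial\bar\partial$-closed positive $(1,1)$-current $T$ with $\int_X\Theta\wedge T=0$ is supported in $Exc(F)$, and Alessandrini--Bassanelli's vanishing theorem for $\partial\bar\partial$-closed positive currents supported on sets of vanishing Hausdorff $2(n-1)$-measure gives $T=0$ directly --- no decomposition into divisorial and residual parts is needed. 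This is the paper's Proposition 3.6.
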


For general $[\alpha]\in NS(X)_{\mathbb{R}}$, the map $F_{[\alpha]}$ is described in \cite{T07}. In Section 3, we will give some details on it. After proof of the above theorem, then some examples will be given to show that the balanced
cone can be bigger than the image of the K\"ahler cone under the map $\mathbf b$. We believe that
it will be very interesting
if one can describe $\bf \overline b (\overline{\mathcal{K}})\cap \mathcal{B}$ clearly for a compact K\"ahler manifold.
\vspace{2mm}

In the last part we assume that $X$ is an $n$-dimensional K\"ahler manifold with holomorphically
trivial canonical bundle. (Hence, $X$
is a Calabi-Yau manifold.)
We will give an analytic method to distinguish the K\"ahler classes
and the nef but not K\"ahler classes which are mapped into
the balanced cone. We fix a Calabi-Yau metric $\omega_0$ satisfying
$\int_X \omega_0^n=1$ and a non-vanishing holomorphic $n$-form
$\zeta$ such that $\parallel\zeta\parallel_{\omega_0}=1$. For any
K\"ahler class $[\omega]$, Yau's theorem states that there exists a
unique Calabi-Yau metric $\omega_{CY}\in[\omega]$ such that
$\parallel\zeta\parallel_{\omega_{CY}}$, the (pointwise) norm of
$\zeta$ with respect to $\omega_{CY}$, is a constant. Under the
above assumption, this constant can be computed as follows:
\begin{equation*}
\parallel\zeta\parallel_{\omega_{CY}}^2=
\frac{\parallel\zeta\parallel_{\omega_{CY}}^2}{\parallel\zeta\parallel_{\omega_0}^2}=\frac{\omega_0^n}{\omega^n_{CY}}
=\frac{1}{\int_X\omega_{CY}^n}=\frac 1 {\int_X\omega^n}.
\end{equation*}
We can also ask whether there exists a balanced metric
$\Omega_{CY}$ in a given  balanced class $[\Omega]\in\mathcal B$ such
that
\begin{equation}\label{eon}
\parallel\zeta\parallel^2_{\Omega_{CY}}=c
\end{equation}
is a constant. This is the motivation of  papers
\cite{FWW10,FWW11}. There may be infinitely many solutions to equation
(\ref{eon}) in a given  balanced class. For example, Wang, Wu and
the first named author \cite{FWW10} proved that if $X$ is a
complex $n$-torus, then for a given  K\"ahler metric
$\omega$ and for any constant $c\geq (\int_{T^n}\omega^n)^{-1}$,
equation (\ref{eon}) has solutions in $[\omega^{n-1}]$.   They also
proved that for any Calabi-Yau manifold $X$ and a given K\"ahler
metric $\omega$ on $X$, if equation (\ref{eon}) has a solution in
$[\omega^{n-1}]$ for $c\leq (\int_X\omega^n)^{-1}$, then $c=(\int_X\omega^n)^{-1}$ and this
solution must be  the Calabi-Yau metric. Here we can prove that if
$\alpha$ is a nef but not K\"ahler class and $[\alpha^{n-1}]\in
\mathcal B$, then there  exists no solution in $[\alpha^{n-1}]$
of the  equation (\ref{eon}) for $c\leq(\int_X\alpha^n)^{-1}$.

\begin{theo}\label{chara}
Let $X$ be an $n$-dimensional Calabi-Yau manifold with a Calabi-Yau
metric $\omega_0$ and a non-vanishing holomorphic $n$-form $\zeta$
such that $\int_X\omega_0^n=1$ and
$\parallel\zeta\parallel_{\omega_0}=1$. Let
$[\alpha]\in\overline{{\mathcal K}}$ such that
$\overline{\mathbf b}([\alpha])=[\alpha^{n-1}]\in \mathcal B$.
\begin{enumerate}
\item If $[\alpha]\in\partial\mathcal K$, then equation (\ref{eon}) for $c\leq (\int_X\alpha^n)^{-1}$ has no
solution in the balanced class $[\alpha^{n-1}]$.
\item  If $[\alpha]\in \mathcal K$,
then there exists a unique solution $\Omega_{CY}\in[\alpha^{n-1}]$ of the equation (\ref{eon})
for $c\leq (\int_X\alpha^n)^{-1}$.  Actually in this case,
$c= (\int_X\alpha^n)^{-1}$
and $\Omega_{CY}=\omega_{CY}^{n-1}$
for the unique Calabi-Yau metric $\omega_{CY}$ in the K\"ahler class $[\alpha]$.
\end{enumerate}
\end{theo}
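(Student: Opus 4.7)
The plan is to exploit a pointwise AM-GM inequality combined with the observation that $\|\zeta\|^2_\omega=c$ forces $\omega^n=\omega_0^n/c$ pointwise and hence $\int_X\omega^n=1/c$. For part~$(2)$, take the Calabi-Yau representative $\omega_{CY}\in[\alpha]$, which satisfies $\omega_{CY}^n=(\int_X\alpha^n)\,\omega_0^n$. The pointwise inequality
\[
\omega_{CY}\wedge\omega^{n-1}\;\geq\;(\omega_{CY}^n)^{1/n}(\omega^n)^{(n-1)/n},
\]
with equality iff $\omega_{CY}$ is a pointwise scalar multiple of $\omega$, integrates to a global inequality whose left-hand side equals $\int_X\alpha^n$ (since $[\omega^{n-1}]=[\omega_{CY}^{n-1}]$ and $d\omega_{CY}=0$) and whose right-hand side evaluates to $(\int_X\alpha^n)^{1/n}(1/c)^{(n-1)/n}$ (using that $\omega^n$ and $\omega_{CY}^n$ are explicit multiples of $\omega_0^n$). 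Comparing forces $c\geq(\int_X\alpha^n)^{-1}$; combined with the hypothesis this gives $c=(\int_X\alpha^n)^{-1}$, pointwise equality in AM-GM everywhere, and the volume identity $\omega_{CY}^n=\omega^n$, which together imply $\omega=\omega_{CY}$. Existence is immediate from $\Omega_{CY}=\omega_{CY}^{n-1}$.

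For part~$(1)$, $[\alpha]\in\partial\mathcal K$ admits no smooth K\"ahler CY metric, so I replace $\omega_{CY}$ by a positive current. If $\int_X\alpha^n>0$, Boucksom-Eyssidieux-Guedj-Zeriahi~\cite{BEGZ} supplies a positive current $T\in[\alpha]$ with locally bounded potentials on the ample locus $\Omega_{\mathrm{amp}}([\alpha])$ and non-pluripolar Monge-Amp\`ere mass $\langle T^n\rangle=(\int_X\alpha^n)\,\omega_0^n$. Since $\omega^{n-1}$ is smooth, $\int_X T\wedge\omega^{n-1}=[\alpha]\cdot[\alpha^{n-1}]=\int_X\alpha^n$. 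Running the AM-GM argument of part~$(2)$ with $\omega_{CY}$ replaced by $T$ (applied a.e.\ on $\Omega_{\mathrm{amp}}([\alpha])$, where $T$ defines a semi-positive $(1,1)$-form in the Bedford-Taylor sense) forces pointwise equality, so $T=\omega$ almost everywhere on $\Omega_{\mathrm{amp}}([\alpha])$. Since $T$ is $d$-closed and $\omega$ is smooth, $d\omega=0$ on the Zariski-dense open set $\Omega_{\mathrm{amp}}([\alpha])$, hence on all of $X$, so $\omega$ is K\"ahler. Then $[\omega]$ and $[\alpha]$ are both nef and big with $[\omega^{n-1}]=[\alpha^{n-1}]$, and Theorem~\ref{inj_thm} gives $[\omega]=[\alpha]$, contradicting $[\alpha]\in\partial\mathcal K$.

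If instead $\int_X\alpha^n=0$, pick any positive closed $(1,1)$-current $T\in[\alpha]$, which exists by nefness (e.g.\ as a weak subsequential limit of K\"ahler representatives of $[\alpha+\epsilon\omega_0]$, whose $\omega_0^{n-1}$-masses stay bounded). Then $T\wedge\omega^{n-1}$ is a positive measure with total mass $[\alpha]\cdot[\alpha^{n-1}]=\int_X\alpha^n=0$, hence identically zero. Since $T\wedge\omega^{n-1}=\tfrac{1}{n}\,\mathrm{tr}_\omega(T)\,\omega^n$ distributionally, $\mathrm{tr}_\omega(T)=0$; for a positive $(1,1)$-current this forces $T\equiv 0$, contradicting $[\alpha]\neq 0$ (which holds because $[\alpha^{n-1}]\in\mathcal B$ and the zero class never lies in $\mathcal B$ on a compact K\"ahler manifold).

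The most delicate step is the big sub-case of part~$(1)$: making the pointwise AM-GM rigorous for the singular current $T$ via the BEGZ regularity on the ample locus, and verifying that the resulting a.e.\ equality $T=\omega$ propagates to $d\omega=0$ on all of $X$ in a form strong enough to invoke Theorem~\ref{inj_thm}.
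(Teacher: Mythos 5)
Your argument is correct and rests on the same three pillars as the paper's --- the degenerate Monge--Amp\`ere theorem of Lemma \ref{begz10}, the pointwise AM--GM inequality, and the injectivity statement of Theorem \ref{inj_thm} --- but the execution differs in a way worth recording. The paper routes part (1) through the intermediate Theorem 4.1 (if some balanced metric $\tilde\omega$ with $\tilde\omega^{n-1}\in[\alpha^{n-1}]$ satisfies $\min_X\tilde\omega^n/\eta\ge\int_X\alpha^n/\int_X\eta$, then $[\alpha]$ is K\"ahler), whose proof approximates $[\alpha]$ by the K\"ahler classes $[\alpha+t\omega]$, solves a family of Monge--Amp\`ere equations, and extracts weak limits of the measures $\alpha_t\wedge\tilde\omega_t^{n-1}$ and $\alpha_t^n$ before passing to the ample locus. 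You instead pair the limiting BEGZ current $T$ directly against the smooth closed form $\omega^{n-1}$, combine the global mass identity $\int_XT\wedge\omega^{n-1}=\int_X\alpha^n$ with the pointwise inequality $T\wedge\omega^{n-1}\ge(T^n)^{1/n}(\omega^n)^{(n-1)/n}$ on $\mathrm{Amp}(\alpha)$, and read off both $c\ge(\int_X\alpha^n)^{-1}$ and the equality analysis in one stroke; this removes the weak-convergence bookkeeping and, pleasantly, \emph{derives} the volume comparison that the paper's Theorem 4.1 takes as a hypothesis. You also reprove the K\"ahler case (2) by the same AM--GM computation where the paper simply cites \cite{FWW10}, and you dispose of the case $\int_X\alpha^n=0$ by the positive-current pairing argument that the paper uses in the proof of Theorem \ref{nefb} to show that $[\alpha^{n-1}]\in\mathcal B$ forces bigness. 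One small adjustment: invoke the full strength of Lemma \ref{begz10}, namely that the potential is \emph{smooth} on $\mathrm{Amp}(\alpha)$ so that $T$ is there a genuine K\"ahler metric with $T^n=(\int_X\alpha^n)\,\omega_0^n$ pointwise; resting only on locally bounded potentials and Bedford--Taylor products would make the pointwise equality case of AM--GM, and hence the conclusion $T=\omega$, needlessly delicate. With smoothness in hand, the propagation from $d\omega=0$ on the dense open set $\mathrm{Amp}(\alpha)$ to $d\omega=0$ on $X$, and the final appeal to Theorem \ref{inj_thm}, are exactly as in the paper.
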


It is conjectured that for any $c>(\int_X\alpha^n)^{-1}$, the form-type Calabi-Yau equation (\ref{eon}) has solutions in the
balanced class $[\alpha^{n-1}]$ in the above theorem.\footnote{Recently, this conjecture has been solved by V. Tosatti and B. Weinkove in \cite{TW13}.} \\

The paper is organized as follows. In Section 2 we prove Proposition \ref{injective map} and Theorem \ref{inj_thm}, which will also be
 generalized to the Fujiki class $\mathcal C$. In Section 3 we prove Theorem \ref{nefb} and
give two examples.
In Section 4 we will use Theorem \ref{inj_thm} to prove Theorem \ref{chara}. Finally, for reader's convenience, we show in the appendix the equivalence of the balanced cone and the movable cone of a K\"ahler manifold under the assumption $\mathcal{E}^ \vee =\overline{\mathcal{M}}$ following the arguments of Toma.

{\bf Acknowledgments.} We would like to thank Prof. J.-P. Demailly, Zhizhang Wang, D. Wu and Prof. S.-T. Yau
for useful discussions and V. Tosatti for useful suggestions and comments. We are also indebted to the referees for helpful comments and suggestions.
Fu is supported in part by NSFC grants 11025103 and 11121101.

\section{Injectivity}
In this section, as a warm-up we first prove Proposition \ref{injective map}, which states that
the map $\mathbf b$ is injective. We remark that this is just a special case of Theorem \ref{inj_thm}. By presenting its proof here, we want to emphasize how to apply the solutions of the  complex Monge-Amp\`{e}re equations and the AM-GM inequality to obtain the result.
\begin{proof}
We need to prove that if $\omega_1$ and $\omega_2$ are two K\"ahler
metrics on $X$ satisfying
\begin{equation}\label{relation}
\omega^{n-1}_1=\omega_2^{n-1}+i\partial\bar\partial\varphi
\end{equation}
for some real $(n-2,n-2)$-form $\varphi$, then there exists a smooth
function $f$ on $X$ such that
$$\omega_1=\omega_2+i\partial\bar\partial f.$$

Let us first recall Yau's theorem on the complex
Monge-Amp\`ere equations on a compact K\"ahler manifold.
\begin{lemm}\textup{(}\cite{Yau78}\textup{)}\label{yau}
Let $X$ be a compact $n$-dimensional K\"ahler manifold with a
K\"ahler metric $\omega$. Then for any smooth volume form $\eta>0$
satisfying $\int_X\eta=\int_X\omega^n$, there exists a unique K\"ahler
metric $\tilde\omega=\omega+i\partial\bar\partial u$ in the
K\"ahler class $[\omega]$ such that $\tilde\omega^n=\eta$.
\end{lemm}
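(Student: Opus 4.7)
The plan is to prove Yau's theorem by the continuity method, reducing existence to a priori estimates for a family of complex Monge--Amp\`ere equations, and to get uniqueness from an integration-by-parts argument.

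First I would reformulate the problem. Since $\eta>0$ is smooth and $\int_X\eta=\int_X\omega^n$, write $\eta=e^F\omega^n$ for a smooth function $F$, necessarily satisfying $\int_X(e^F-1)\omega^n=0$. Finding $\tilde\omega=\omega+i\partial\bar\partial u$ with $\tilde\omega^n=\eta$ is then equivalent to solving
\begin{equation*}
(\omega+i\partial\bar\partial u)^n=e^F\omega^n,\qquad \omega+i\partial\bar\partial u>0,
\end{equation*}
for $u\in C^\infty(X)$ with some normalization, e.g.\ $\int_X u\,\omega^n=0$. Uniqueness comes first: if $u_1,u_2$ are two smooth solutions (same normalization), set $v=u_1-u_2$ and $\tilde\omega_j=\omega+i\partial\bar\partial u_j$. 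Then
\begin{equation*}
0=\tilde\omega_1^n-\tilde\omega_2^n=i\partial\bar\partial v\wedge T,\qquad T=\sum_{k=0}^{n-1}\tilde\omega_1^{k}\wedge\tilde\omega_2^{n-1-k},
\end{equation*}
with $T$ a strictly positive $(n-1,n-1)$-form. Multiplying by $v$ and integrating by parts gives $\int_X i\partial v\wedge\bar\partial v\wedge T=0$, which forces $dv=0$, hence $v$ is constant and the normalization makes it zero.

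For existence I would run the continuity method on
\begin{equation*}
(\omega+i\partial\bar\partial u_t)^n=e^{tF+c_t}\omega^n,\qquad t\in[0,1],
\end{equation*}
with $c_t$ chosen so both sides have equal integral (so $c_0=0$, $c_1=0$). Let $T=\{t\in[0,1]:\text{a smooth admissible solution }u_t\text{ exists}\}$. Then $0\in T$ via $u_0\equiv 0$. Openness of $T$ follows from the implicit function theorem between the Banach spaces $\{u\in C^{k,\alpha}(X):\int_X u\,\omega^n=0\}$ and $\{f\in C^{k-2,\alpha}(X):\int_X f\,\omega^n=0\}$: the linearization of $u\mapsto\log(\tilde\omega^n/\omega^n)$ at $u_t$ is $v\mapsto\Delta_{\tilde\omega_t}v$, an isomorphism on mean-zero functions by standard elliptic theory on the compact manifold $(X,\tilde\omega_t)$.

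The core difficulty, and the whole content of Yau's theorem, is closedness of $T$, which requires uniform a priori estimates $\|u_t\|_{C^{k,\alpha}}\le C_k$ independent of $t$. I would proceed in the classical order:
\begin{enumerate}
\item \emph{$C^0$ estimate}: apply Moser iteration to the equation, using the identity
\begin{equation*}
\int_X|u_t|^{p}(e^{tF+c_t}-1)\omega^n=\int_X|u_t|^{p}(\tilde\omega_t^n-\omega^n),
\end{equation*}
integration by parts, and the Sobolev inequality on $(X,\omega)$, to bootstrap $L^p$ bounds into a uniform $L^\infty$ bound on $u_t$.
\item \emph{$C^2$ estimate}: apply the maximum principle to $\log\mathrm{tr}_\omega\tilde\omega_t-Au_t$ for a sufficiently large constant $A$ depending on a lower bound for the bisectional curvature of $\omega$; this gives a uniform bound on $\Delta_\omega u_t$, hence $c^{-1}\omega\le\tilde\omega_t\le c\,\omega$ uniformly.
\item \emph{$C^{2,\alpha}$ estimate}: with $\tilde\omega_t$ uniformly equivalent to $\omega$, the equation becomes a uniformly elliptic concave fully nonlinear equation, to which Evans--Krylov theory applies (alternatively Calabi's $C^3$ estimate).
\item \emph{Higher regularity}: bootstrap via Schauder estimates on the linearized equation.
\end{enumerate}
Arzel\`a--Ascoli then extracts a limit as $t_k\to t_\infty\in\overline T$, proving closedness. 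Hence $T=[0,1]$ and $u_1$ solves the original equation.

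The main obstacle, historically the gap in Calabi's program, is the $C^0$ estimate and the $C^2$ estimate. Yau's key insights are the Moser iteration scheme for $C^0$ (which exploits the positivity of $\tilde\omega_t^n$ in a nontrivial way) and the choice of auxiliary quantity $\log\mathrm{tr}_\omega\tilde\omega_t-Au_t$ whose maximum principle analysis needs a careful combinatorial identity for $\Delta_{\tilde\omega}\log\mathrm{tr}_\omega\tilde\omega$; everything else is comparatively standard elliptic PDE.
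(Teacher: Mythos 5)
This lemma is not proved in the paper at all---it is Yau's theorem, quoted directly from \cite{Yau78} and used as a black box---so the only ``paper proof'' to compare against is the cited reference, and your continuity-method outline (uniqueness by integrating $v\,i\partial\bar\partial v$ against the closed strictly positive form $\sum_{k=0}^{n-1}\tilde\omega_1^{k}\wedge\tilde\omega_2^{n-1-k}$, openness by the implicit function theorem with linearization $\Delta_{\tilde\omega_t}$, closedness via the $C^0$ Moser iteration, the maximum principle applied to $\log\mathrm{tr}_\omega\tilde\omega_t-Au_t$, Evans--Krylov or Calabi, and Schauder bootstrap) is precisely the standard proof given there. Your sketch is correct in structure and identifies the right auxiliary quantities, with the unavoidable caveat that the actual analytic content of \cite{Yau78}---deriving the $C^0$ and $C^2$ estimates---is named rather than carried out, and with the minor standard imprecision that in the openness step the target of the linearized operator is the space of functions with zero mean against $\tilde\omega_t^n$ (not $\omega^n$), which is handled by keeping the constant $c_t$ as an unknown in the implicit function theorem.
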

We use Yau's theorem as follows. Let $c$ be the following constant:
$$\int_X\omega^n_2=c\int_X\omega^n_1.$$
Without loss of generality, we  assume that $c\geq 1$. Since the
class $[\omega_2]$ is K\"ahler, by Yau's theorem we can
find a representative $\tilde\omega_2=\omega_2+i\partial\bar\partial
u$ of $[\omega_2]$ such that
\begin{equation}\label{201}
\tilde\omega_{2}^n=c\omega_1^n.
\end{equation}
However, the equalities
 $$[\tilde\omega_{2}^{n-1}]=[\omega_2^{n-1}]=[\omega_1^{n-1}]$$
imply that there exists a real $(n-2,n-2)$-form $\phi$ such that
\begin{equation}\label{202}
\tilde\omega_{2}^{n-1}=\omega_1^{n-1}+i\partial\bar\partial\phi.
\end{equation}
We will use the following
notations (see \cite{FWW10}). If $\Theta$ is a real
$(n-1,n-1)$-form, then $(\Theta_{i\bar j})$ is the matrix whose entries are
the coefficients of $\Theta$, and $(\Theta^{i\bar j})$ is its inverse
matrix. We will also denote $\det\Theta=\det(\Theta_{i\bar j})$.
Hence, combining (\ref{201}) with (\ref{202}), we find
\begin{eqnarray*}
\begin{aligned}
\Bigl(\frac 1
c\Bigr)^{n-1}=&\Bigl(\frac{\det\omega_1}{\det\tilde\omega_{2}}\Bigr)^{n-1}
=\frac{\det\omega_1^{n-1}}{\det\tilde\omega_{2}^{n-1}}\\
=&\frac{\det\omega_1^{n-1}}{\det(\omega_1^{n-1}+i\partial\bar\partial\phi)}.
\end{aligned}
\end{eqnarray*}

Now we follow the proof of Lemma 10 in \cite{FWW10}.
 We apply the
AM-GM inequality to obtain
  \begin{eqnarray} \label{eq:agm}
  \begin{aligned}
     c^{\frac  {n-1}{n}}
     =&\Bigl(\frac{\det(\omega_1^{n-1}+i\partial\bar\partial\phi)}{\det\omega_1^{n-1}}\Bigr)^{\frac
     1 n} \\
     \le & 1 + \frac{1}{n} \sum_{i,j} (\omega_1^{n-1})^{i\bar{j}} \Big(i\partial\bar\partial \phi\Big)_{i\bar{j}},
     \end{aligned}
  \end{eqnarray}
  which implies
  $$ c^{\frac {n-1}{n}}\omega_1^n\leq \omega_1^n+i\partial\bar\partial\phi\wedge\omega_1.$$
Integrating over $X$, since $\omega_1$ is K\"ahler, we get
  \[
     c^{\frac {n-1} n}\int_X \omega_1^n \le  \int_X \omega_1^n. \]
This shows that $c=1$ and  a pointwise equality in
\eqref{eq:agm} holds. This forces that $i\partial\bar\partial \phi = 0$.
Therefore,  (\ref{202}) implies  $\tilde\omega_{2}=\omega_1$ and
$\omega_1=\omega_2+i\partial\bar\partial u$.
\end{proof}


\begin{rema}
\textup{
When $n=3$, equation (\ref{relation}) implies
$$(\omega_1-\omega_2)\wedge (\omega_1+\omega_2)=i\partial\bar\partial\varphi.$$
Since $\omega_1+\omega_2$ is a K\"ahler metric, by the hard
Lefschetz theorem, $\omega_1+\omega_2$ defines an isomorphism from
$H^{1,1}_{dR}(X,\mathbb R)$ to $H^{2,2}_{dR}(X,\mathbb R)$. Hence
$\omega_1-\omega_2$ is trivial in $H^{1,1}_{dR}(X,\mathbb R)$. For
$n>3$, we can rewrite  (\ref{relation}) as
\begin{equation*}
(\omega_1-\omega_2)\wedge\Bigl(\sum_{k=0}^{n-2}\omega_1^{n-k-2}\wedge\omega_2^k\Bigr)=i\partial\bar\partial\varphi.
\end{equation*}
Here $\sum_{k=0}^{n-2}\omega_1^{n-k-2}\wedge\omega_2^k$ is a
$d$-closed strictly positive definite $(n-2,n-2)$-form. In general,
such a form cannot be represented by $\omega_0^{n-2}$ for some
Hermitian metric $\omega_0$. Otherwise $\omega_0$ is also K\"ahler
\textup{(}cf. \cite{GH}\textup{)} and then the hard Lefschetz theorem
also implies that $\omega_1-\omega_2$ is trivial. Anyway, we don't know
whether there exists an algebraic proof of Proposition 1.1.}
\end{rema}

We can generalize the above proposition from the K\"ahler classes to the  nef
and big classes. Instead of constructing two equal K\"ahler metrics, we will construct two equal currents. Hence, we need the following important theorem in \cite{BEGZ}.
\begin{lemm}\textup{(\cite{BEGZ})}
\label{begz10} Let $X$ be a compact $n$-dimensional K\"ahler
manifold and let $\eta$ be a smooth volume form on $X$. Let
$[\alpha]$ be a nef and big class on $X$. Then there exists a unique
$\alpha$-psh function $u$ with $\sup_{X}u=0$ such that
$$\bigl\langle(\alpha+i\partial\bar{\partial}u)^{n}\bigr\rangle=c\eta \ \ \ \textup{with}\
c=\frac{\int_{X}\alpha^{n}}{\int_{X}\eta}>0.$$ Here  $\langle\cdot\rangle$
denotes the non-pluripolar product of positive currents. Moreover,
$u$ has minimal singularities and is smooth on
$\textup{Amp}(\alpha)$, which is a Zariski open set of $X$ depending
only on the cohomology class of $\alpha$.
\end{lemm}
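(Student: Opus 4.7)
The plan is to follow the Boucksom--Eyssidieux--Guedj--Zeriahi strategy and reduce the statement to a degenerate Monge--Amp\`ere equation solved by approximation together with a pluripotential-theoretic uniqueness argument. Fix a smooth representative $\alpha$ of the given nef and big class and a fixed K\"ahler form $\omega$ on $X$, and set up the basic pluripotential framework: $\alpha$-plurisubharmonic functions, the non-pluripolar product $\bigl\langle(\alpha+i\partial\bar\partial u)^n\bigr\rangle$ (defined locally by truncation $\max(u,-k)$ and shown to be independent of the truncation), and the minimal singularity envelope
\[
V_\alpha = \sup\bigl\{\phi : \phi\ \text{is $\alpha$-psh},\ \phi\le 0\bigr\}.
\]
Since $[\alpha]$ is nef and big, Demailly's regularization together with the Demailly--Paun criterion $\int_X\alpha^n>0$ ensures that $V_\alpha$ has minimal singularities and that the Zariski open set $\mathrm{Amp}(\alpha)$, defined as the complement of the singular locus of some K\"ahler current with analytic singularities in $[\alpha]$, depends only on the cohomology class.

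Next I would establish existence by approximation. Consider the K\"ahler classes $[\alpha_\varepsilon]=[\alpha]+\varepsilon[\omega]$ and, by Yau's theorem (Lemma 2.2), solve
\[
(\alpha_\varepsilon+i\partial\bar\partial u_\varepsilon)^n = c_\varepsilon\,\eta,\qquad \sup_X u_\varepsilon = 0,\qquad c_\varepsilon = \tfrac{\int_X\alpha_\varepsilon^n}{\int_X\eta}.
\]
The central technical step is to obtain a uniform $L^\infty$ bound on $u_\varepsilon - V_{\alpha_\varepsilon}$ independent of $\varepsilon$: this requires a Ko{\l}odziej-type estimate adapted to the big setting, proved by a capacity comparison argument using the fact that $\eta$ has $L^p$ density with respect to a fixed volume form for every $p$. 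Given this a priori bound, standard compactness of $\alpha_\varepsilon$-psh functions gives a subsequential $L^1$ limit $u$ which is $\alpha$-psh with $\sup_X u=0$ and dominates $V_\alpha+O(1)$, hence has minimal singularities. A monotone convergence argument for non-pluripolar products, combined with the fact that the total masses $\int_X\langle(\alpha_\varepsilon+i\partial\bar\partial u_\varepsilon)^n\rangle$ converge to $\int_X\alpha^n$, shows that $\langle(\alpha+i\partial\bar\partial u)^n\rangle = c\,\eta$ with the claimed constant.

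For uniqueness I would use a Dinew-type comparison principle for non-pluripolar products of functions with the same singularities: if $u_1,u_2$ are two solutions with minimal singularities, then $\langle(\alpha+i\partial\bar\partial u_1)^n\rangle = \langle(\alpha+i\partial\bar\partial u_2)^n\rangle$ forces $u_1 - u_2$ to be constant, and the normalization $\sup u_j = 0$ pins the constant to zero. Equivalently one can appeal to strict convexity of the Mabuchi-type energy functional $E_\alpha$ on the space of finite-energy $\alpha$-psh potentials and observe that any two solutions are critical points of the same functional.

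Finally, for smoothness on $\mathrm{Amp}(\alpha)$: choose a K\"ahler current $T = \alpha + i\partial\bar\partial\psi$ with analytic singularities contained in $X\setminus\mathrm{Amp}(\alpha)$, so on $\mathrm{Amp}(\alpha)$ the form $\alpha + i\partial\bar\partial\psi$ is a genuine smooth K\"ahler metric. Writing $u = \psi + v$ locally reduces the equation to a non-degenerate complex Monge--Amp\`ere equation with smooth positive right-hand side; the interior a priori estimates of Yau (with Evans--Krylov for $C^{2,\beta}$ and Schauder bootstrapping) then give smoothness of $v$, and hence of $u$, on $\mathrm{Amp}(\alpha)$. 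The main obstacle is the uniform $L^\infty$ estimate in the approximation step, since the reference form degenerates as $\varepsilon\to 0$; all other steps are fairly standard once this bound is in hand.
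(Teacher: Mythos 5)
Your proposal is correct and takes essentially the same route as the paper: the paper treats this lemma as a citation to \cite{BEGZ} and only sketches precisely this approximation scheme --- solving $(\alpha+t\omega+i\partial\bar{\partial}u_t)^{n}=c_{t}\eta$ by Yau's theorem for the K\"ahler classes $[\alpha]+t[\omega]$, extracting an $L^1$-limit $u$ by compactness of normalized quasi-psh functions, and invoking the estimates of \cite{BEGZ} together with Yau's interior estimates to upgrade to $C^{\infty}_{\textup{loc}}(\textup{Amp}(\alpha))$ convergence, so that $u$ is smooth there. The extra ingredients you supply (the Ko{\l}odziej-type uniform $L^\infty$ bound relative to $V_\alpha$, minimal singularities, convergence of non-pluripolar masses, and the comparison-principle uniqueness) are exactly the technical steps the paper delegates to the citation, so your write-up is a faithful expansion of the paper's argument rather than a different one.
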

Recall that $u$ is called an $\alpha$-psh function if $\alpha+i\partial \bar \partial u$ is a positive current. Let us briefly discuss how the result is obtained. In fact, by Yau's theorem, the
above degenerate complex Monge-Amp\`ere equation can be solved by
approximation.  Fix a K\"ahler metric $\omega$ on X.
If we write $c_{t}=\int_{X}(\alpha+t\omega)^{n}/\int_{X}\eta$ with
$0<t<1$, then there exists a unique smooth function $u_t$ with
$\sup_{X}u_t=0$ such that
$$(\alpha+t\omega+i\partial\bar{\partial}u_t)^{n}=c_{t}\eta. $$
First, by basic properties of plurisubharmonic functions, the
family of solutions $u_t$ is compact in $L^1(X)$-topology and then
there exists a sequence $u_{t_k}$ such that
\begin{equation*}
\alpha+t_k\omega+i\partial\bar{\partial}u_{t_k}\to
\alpha+i\partial\bar{\partial}u \ \ \ \ \ \textup{as currents on} \
X.
\end{equation*}
Moreover, by the theory developed in \cite{BEGZ} and Yau's basic estimates
in \cite{Yau78}, $u_t$ is compact in
$C^{\infty}_{\textup{loc}}(\textup{Amp}(\alpha))$. Therefore there exists a
subsequence of $u_{t_k}$, which is still denoted as $u_{t_k}$, (we will not stress this point in the following,)  such that
\begin{equation*}
\alpha+t_k\omega+i\partial\bar{\partial}u_{t_k}\to
\alpha+i\partial\bar{\partial}u \ \ \ \ \ \textup{in}\ \
C^{\infty}_{\textup{loc}}(\textup{Amp}(\alpha)).
\end{equation*}
Hence $u$ is smooth on $\textup{Amp}(\alpha)$. Since $\eta$ is the
smooth volume form, $\alpha+i\partial\bar{\partial}u$ is a K\"ahler
metric on $\textup{Amp}(\alpha)$.

Now we are ready to prove Theorem \ref{inj_thm}. We rephrase it as
\begin{theo}\label{bignef}
Let $X$ be a compact $n$-dimensional K\"ahler manifold. If
$[\alpha]$ and $[\beta]$ are two nef and big classes and
$[\alpha^{n-1}]=[\beta^{n-1}]$, then $[\alpha]=[\beta]$.
\end{theo}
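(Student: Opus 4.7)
The plan is to follow the argument of Proposition \ref{injective map}, with Yau's theorem replaced by Lemma \ref{begz10} (the BEGZ theorem) and the computations pushed to the ample locus. Fix a smooth positive volume form $\eta$ on $X$. Applying Lemma \ref{begz10} to each class yields positive currents $T_\alpha\in[\alpha]$ and $T_\beta\in[\beta]$ with minimal singularities, such that $\langle T_\alpha^n\rangle = (\int_X\alpha^n/\int_X\eta)\,\eta$ and $\langle T_\beta^n\rangle = (\int_X\beta^n/\int_X\eta)\,\eta$; moreover, both are smooth K\"ahler metrics on the Zariski-open subset $U:=\textup{Amp}(\alpha)\cap\textup{Amp}(\beta)$. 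Set $c:=\int_X\beta^n/\int_X\alpha^n$ and, without loss of generality, assume $c\geq 1$. Pointwise on $U$ one has $T_\beta^n = c\, T_\alpha^n$, and therefore $\det T_\beta^{n-1}/\det T_\alpha^{n-1} = c^{n-1}$.

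Repeating the AM--GM step from the proof of Proposition \ref{injective map}, but with $(\omega_1,\tilde\omega_2)$ replaced by $(T_\alpha,T_\beta)$, gives, pointwise on $U$,
$$c^{(n-1)/n} \leq 1 + \frac{1}{n}\sum_{i,j}(T_\alpha^{n-1})^{i\bar j}(T_\beta^{n-1}-T_\alpha^{n-1})_{i\bar j},$$
and multiplication by $T_\alpha^n$ yields $c^{(n-1)/n}\, T_\alpha^n \leq T_\beta^{n-1}\wedge T_\alpha$ on $U$. Currents of minimal singularities carry no mass on the pluripolar set $X\setminus U$, so integration extends from $U$ to $X$. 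Using the cohomological mass identity for nef and big classes --- which relies on the Gromov--Demailly convexity inequality together with the hypothesis $[\alpha^{n-1}]=[\beta^{n-1}]$ --- one obtains
$$\int_X T_\beta^{n-1}\wedge T_\alpha \;=\; \int_X \beta^{n-1}\wedge\alpha \;=\; \int_X \alpha^n \;=\; \int_X T_\alpha^n,$$
so $c^{(n-1)/n}\int_X\alpha^n \leq \int_X\alpha^n$, forcing $c=1$ and $\int_X\alpha^n=\int_X\beta^n$.

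With $c=1$, equality must hold in the AM--GM inequality pointwise on $U$, making the eigenvalues of $(T_\alpha^{n-1})^{-1}T_\beta^{n-1}$ coincide; since their product is $c^{n-1}=1$, each equals $1$, whence $T_\beta^{n-1}=T_\alpha^{n-1}$ on $U$. Uniqueness of positive $(n-1)$-th roots of positive Hermitian $(1,1)$-forms then gives $T_\alpha=T_\beta$ on $U$. A density argument, exploiting that both currents have minimal singularities and $U$ is Zariski-dense in $X$, promotes this to $T_\alpha=T_\beta$ as currents on $X$, and hence $[\alpha]=[\beta]$.

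The main obstacle is current-theoretic rather than analytic: the AM--GM step is routine, but the mass equality $\int_X T_\beta^{n-1}\wedge T_\alpha = \int_X \beta^{n-1}\wedge\alpha$ --- which is where the Gromov--Demailly convexity inequality enters --- and the global extension of the pointwise identity $T_\alpha=T_\beta|_U$ to all of $X$ must be justified with care. A safer implementation, avoiding any subtlety about cohomology classes of non-pluripolar products, is to work throughout with the Yau-type smooth approximations $\alpha_t\in[\alpha+t\omega]$, $\beta_t\in[\beta+t\omega]$ solving $\alpha_t^n=c_t\eta$ and $\beta_t^n=d_t\eta$; AM--GM then holds smoothly on $X$, the resulting integrated inequality acquires an error term controlled by $[(\beta+t\omega)^{n-1}]-[(\alpha+t\omega)^{n-1}] = O(t)$ (vanishing as $t\to 0$ by the hypothesis), and one passes to the limit using the $C^\infty_{\textup{loc}}(U)$ convergence provided by Lemma \ref{begz10}.
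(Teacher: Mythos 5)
Your overall strategy is the same as the paper's (BEGZ solutions with minimal singularities, AM--GM on the intersection of the ample loci, then globalization), and the ``safer implementation'' you sketch at the end --- running AM--GM on the Yau approximants $\alpha_t,\beta_t$ and passing to weak limits --- is literally what the paper does. The genuine gap is in your final step. Two closed positive $(1,1)$-currents that agree on a Zariski-open set $U$ need \emph{not} agree on $X$: the integration current $[D]$ along a divisor $D\subset X\setminus U$ and the zero current already agree on $U$, so ``a density argument, exploiting that $U$ is Zariski-dense'' cannot work as stated. What is actually true is that $T_\alpha-T_\beta$ is a $d$-closed $(1,1)$-current supported on the analytic set $X\setminus U$; the support theorems (Lemma \ref{current spt}) force it to equal $\sum_j c_j[D_j]$ over the codimension-one components of $X\setminus U$ (any piece supported in codimension at least two vanishes automatically), and killing the coefficients $c_j$ requires Boucksom's theorem (Lemma \ref{lelong num zero}) that a positive current with minimal singularities in a nef and big class has vanishing Lelong numbers; one then evaluates Lelong numbers at a generic point of each $D_j$ to get $c_j=0$. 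This is the most delicate part of the paper's proof and is missing from yours: ``minimal singularities'' is indeed the decisive hypothesis, but the mechanism is Lelong numbers plus the support theorem, not density.

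A secondary, repairable issue: in your direct (non-approximated) version you integrate the pointwise inequality over $U$ and assert $\int_X T_\beta^{n-1}\wedge T_\alpha=\int_X\beta^{n-1}\wedge\alpha$. The product $T_\beta^{n-1}\wedge T_\alpha$ only makes sense as a non-pluripolar product, and the statement that non-pluripolar mixed products of minimal-singularity currents in nef and big classes compute the cohomological intersection numbers is a further theorem of \cite{BEGZ}, not the Gromov--Demailly convexity inequality as you suggest. In the paper that convexity inequality plays a different role: it is used at the outset to deduce $\int_X\alpha^n=\int_X\beta^n$ from $[\alpha^{n-1}]=[\beta^{n-1}]$, a fact your AM--GM computation instead produces as output (which is fine, and arguably slicker, once the mass identity is in place). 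The paper sidesteps the mass identity entirely by working with the smooth approximants and weak limits of the measures $\beta_t\wedge\alpha_t^{n-1}$ and $\beta_t^n$; if you adopt that route this issue disappears, but the globalization gap above remains.
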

\begin{proof}
Since $\alpha$ and $\beta$ are nef and
$[\alpha^{n-1}]=[\beta^{n-1}]$, we have
\begin{equation}\label{star}
\int_X\beta^n=\int_X\beta\wedge\alpha^{n-1}.
\end{equation}
Then by the convexity inequality in
\cite{Dem93} or \cite{Gro90}, we have
\begin{eqnarray*}
\label{volume ineq}
  \int_{X}\beta^{n}\geq \Bigl(\int_{X}\beta^{n}\Bigr)^{\frac{1}{n}}
  \Bigl(\int_{X}\alpha^{n}\Bigr)^{\frac{n-1}{n}},
\end{eqnarray*}
which implies $\int_{X}\beta^{n}\geq
\int_{X}\alpha^{n}$. Similarly we also have
$\int_{X}\alpha^{n}\geq \int_{X}\beta^{n}$. Thus we get
\begin{equation}\label{volume eq}
    \int_{X}\alpha^{n}= \int_{X}\beta^{n}.
\end{equation}

We fix a K\"ahler metric $\omega$ and a volume form $\eta$ on $X$.
We denote for  $0<t<1$
$$c=\frac{\int_{X}\alpha^{n}}{\int_{X}\eta},\ \ c_{\alpha,t}=\frac{\int_{X}(\alpha+t\omega)^{n}}{\int_{X}\eta}, \ \ \textup{and}\ \
c_{\beta,t}=\frac{\int_{X}(\beta+t\omega)^{n}}{\int_{X}\eta}.$$
Then Lemma \ref{yau} implies that there exist two families of
smooth functions $u_{t}$ and $v_{t}$ such that, if we denote  $\alpha_t=\alpha+t\omega+i\partial\bar\partial u_t$
and $\beta_t=\beta+t\omega+i\partial\bar\partial v_t$, then
\begin{eqnarray*}
   \alpha_t^{n}=c_{\alpha,t}\eta
   \ \ \ \ \ \ \textup{and}\ \ \ \ \ \
    \beta_t^{n}=c_{\beta,t}\eta.
\end{eqnarray*}
Hence
\begin{equation}\label{two app ratio}
   \frac{\alpha_t^{n}}{\beta_t^{n}}=c_t
\end{equation}
with
\begin{equation}
c_{t}=\frac{c_{\alpha,t}}{c_{\beta,t}}=\frac{\int_{X}(\alpha+t\omega)^{n}}{\int_{X}(\beta+t\omega)^{n}}.
\end{equation}
Then identity (\ref{volume eq}) implies
\begin{equation}\label{limitc}
\lim_{t\to 0}c_{t}=1.
\end{equation}

By the assumption $[\alpha^{n-1}]=[\beta^{n-1}]$, there exists a
$(n-2,n-2)$-form $\phi$ such that
$\alpha^{n-1}=\beta^{n-1}+i\partial\bar{\partial}\phi$. We rewrite it as
\begin{equation}\label{app n-1}
    \alpha_{t}^{n-1}=\beta_{t}^{n-1}+\Theta_{t}
\end{equation}
where
 $$\Theta_{t}=i\partial\bar{\partial}\phi+
\sum_{k=0}^{n-2}C_{n-1}^k\bigl(\alpha^k\wedge(t\omega+i\partial\bar\partial
u_t)^{n-1-k}-\beta^k\wedge (t\omega+i\partial\bar\partial
v_t)^{n-1-k}\bigr).$$
Then applying the AM-GM
inequality to (\ref{two app ratio}), we have
\begin{equation}\label{qqq}
\begin{aligned}
   c_{t}^{\frac{n-1}{n}}=&\Bigl(\frac{\det\alpha_{t}^{n-1}}{\det\beta_{t}^{n-1}}\Bigr)^{\frac 1 n}=
   \Bigl(\frac{\det(\beta_{t}^{n-1}+\Theta_{t})}{\det\beta_{t}^{n-1}}\Bigr)^{\frac 1 {n}}\\
 &\leq 1+\frac{1}{n} \sum_{i,j}(\beta_{t}^{n-1})^{i\bar{j}}(\Theta_{t})_{i\bar{j}}.
 \end{aligned}
\end{equation}
We multiply the volume form $\beta_{t}^{n}$ to both sides of the
above inequality and get
\begin{equation}\label{arith-geom ineq form}
    c_{t}^{\frac{n-1}{n}}\beta_{t}^{n}\leq\beta_{t}^{n}+\beta_{t}\wedge\Theta_{t}.
\end{equation}

Next, we consider the limit of $\beta_t\wedge\Theta_t$ as $t$ goes
to zero. By (\ref{app n-1}), we have
\begin{equation}\label{01}
\beta_t\wedge \Theta_t=\beta_t\wedge\alpha_t^{n-1}-\beta_t^n.
\end{equation}
It is easy to see the positive measures  $\beta_{t}\wedge\alpha_{t}^{n-1}$ and
$\beta_{t}^{n}$ have uniformly bounded masses:
\begin{eqnarray*}
\begin{aligned}
||\beta_{t}\wedge\alpha_{t}^{n-1}||_{\textup{mass}}=&\int_{X}(\beta+t\omega)\wedge(\alpha+t\omega)^{n-1}\\
< &\int_{X}(\beta+\omega)\wedge(\alpha+\omega)^{n-1},
\end{aligned}
\end{eqnarray*}
 and
$$||\beta_{t}^{n}||_{\textup{mass}}=\int_{X}(\beta+t\omega)^{n}<\int_{X}(\beta+\omega)^{n}.$$
Hence we can pick a decreasing subsequence $t_k\to 0$ such that
$\beta_{t_k}\wedge\alpha_{t_k}^{n-1}$ and $\beta_{t_k}^n$ converge weakly to $\mu_1$ and $\mu_2$ respectively. Therefore
if we denote $\mu_0=\mu_1-\mu_2$, then as currents,
\begin{equation*}\label{lim measure}
  \beta_{t_k}\wedge\Theta_{t_k}\to\mu_{0} \ \ \ \ \textup{when}\ \ \
  \ t_k\to 0.
\end{equation*}
Moreover, it is not hard to see from (\ref{limitc}) and
(\ref{arith-geom ineq form}) that $\mu_0$ is a positive measure on
$X$. Meanwhile, by (\ref{01}) and (\ref{star}),
\begin{equation*}
\begin{aligned}
    \int_{X}\mu_{0}
    =&\lim_{t_k\to 0}\int_{X}(\beta_{t_k}\wedge\alpha_{t_k}^{n-1}-\beta_{t_k}^{n})
    =&\int_{X}(\beta\wedge\alpha^{n-1}-\beta^{n})=0.
    \end{aligned}
\end{equation*}
Hence $\mu_{0}$ is  a zero measure. In particular,  since
$\Xi:=\textup{Amp}(\alpha)\cap \textup{Amp}(\beta)$ is a Borel set,
we have
\begin{equation}\label{app lim measure}
  \beta_{t_k}\wedge\Theta_{t_k}\to 0 \ \ \ \ \textup{as currents on
  $\Xi$}.
\end{equation}

On the other hand, by Lemma \ref{begz10}, there exists a unique
$\alpha$-psh function $u_0$ with $\sup_X u_0=0$ and a unique
$\beta$-psh function $v_0$ with $\sup_X v_0=0$ such that
$u_0$ (resp. $v_0$) is smooth on $\textup{Amp}(\alpha)$ (resp. $\textup{Amp}(\beta)$) and
\begin{eqnarray*}
  \bigl\langle(\alpha+i\partial\bar{\partial}u_0)^{n}\bigr\rangle=c\eta,\ \ \ \ \
  \bigl\langle(\beta+i\partial\bar{\partial}v_0)^{n}\bigr\rangle=c\eta,
\end{eqnarray*}
Here by (\ref{volume eq}), we have $$c=\frac{\int_X\alpha^n}{\int_X\eta}=\frac{\int_X\beta^n}{\int_X\eta}.$$ If we denote
$\alpha_0=\alpha+i\partial\bar\partial u_0$ and
$\beta_0=\beta+i\partial\bar\partial v_0$, then as discussed before,
there exist subsequences $\alpha_{t_k}$ of $\alpha_t$ and
$\beta_{t_k}$ of $\beta_t$ such that
\begin{equation*}
\alpha_{t_k}\to \alpha_{0} \ \ \ \ \textup{in}\
C^\infty_{\textup{loc}}(\textup{Amp}(\alpha))
\end{equation*}
and
\begin{equation*}
\beta_{t_k}\to \beta_{0} \ \ \ \ \textup{in}\
C^\infty_{\textup{loc}}(\textup{Amp}(\beta)).
\end{equation*}
Thus,
\begin{equation}\label{smooth lim1}
   \Theta_{t_k}\to\Theta_0\ \ \ \textup{and}\ \ \ \beta_{t_k}\wedge\Theta_{t_k}\to\beta_{0}\wedge\Theta_{0} \ \ \ \ \textup{in} \  C^\infty_{\textup{loc}}(\Xi)
\end{equation}
for some smooth form  $\Theta_{0}$ which is only defined  on $\Xi$.
Combining (\ref{app lim measure}) with (\ref{smooth lim1}) and
using uniqueness of the limit, we obtain
\begin{equation*}\label{zero}
    \beta_{0}\wedge\Theta_{0}=0 \ \ \ \ \text{on} \  \Xi.
\end{equation*}
The above equality and (\ref{limitc}) imply that on $\Xi$, if we take the limits of both side of (\ref{qqq}) as $t\to 0 $,
\begin{eqnarray*}
 \label{arith-geom ineq zero}
   1&=&\bigl(\frac{\det\alpha_{0}^{n-1}}{\det\beta_{0}^{n-1}}\bigr)^{\frac{1}{n}}=
   \bigl(\frac{\det(\beta_{0}^{n-1}+\Theta_{0})}{\det\beta_{0}^{n-1}}\bigr)^{\frac{1}{n}} \\
 &\leq& 1+\frac{1}{n}
 \sum_{i,j}(\beta_{0}^{n-1})^{i\bar{j}}(\Theta_{0})_{i\bar{j}}=1,
\end{eqnarray*}
which forces $\Theta_0=0$ on $\Xi$.  Hence $\alpha_0^{n-1}=\beta_0^{n-1}$ on
$\Xi$. Since $\alpha_0$ and $\beta_0$ are K\"ahler metrics on $\Xi$,
we have $\alpha_0=\beta_0$ on $\Xi$.

We claim $\alpha_0=\beta_0$ on $X$. First, we need the following two lemmas.
\begin{lemm}\label{current spt}\textup{(}\cite{DEL}\textup{)}
Let $T$ be a $d$-closed $(p,p)$-current and $\textup{supp}\hspace{0.5mm}  T$  contained in an analytic subset $A$.
If $\dim A<n-p$, then $T=0$; if $T$ is of order zero and $A$ is of dimension $n-p$ with $(n-p)$-dimensional
irreducible components $A_{1},\cdots,A_{k}$, then $T=\sum c_{j}[A_{j}]$ with $c_{j}\in \mathbb{C}$.
\end{lemm}

\begin{lemm}\label{lelong num zero}\textup{(}\cite{B04}\textup{)}
Let $[\alpha]$ be a nef and big class, and let $T_{\min}$ be a positive current in $[\alpha]$ with minimal singularities.
Then the Lelong number $\nu(T_{\min},x)=0$ for any point $x\in X$.
\end{lemm}

Now if we let $T=\alpha_0-\beta_0$, then $T$ is a real $d$-closed (1,1)-current and $\textup{supp}\hspace{0.5mm} T\subset X-\Xi$.
If $X-\Xi$ is of codimension more than one,
then the first part of Lemma \ref{current spt} implies $T=0$. Hence $\alpha_{0}=\beta_{0}$ on $X$. If
$X-\Xi$ has irreducible components $D_{1},\cdots,D_{k}$ of pure codimension one,
then the second part of Lemma \ref{current spt} implies $\alpha_{0}-\beta_{0}=\sum c_{j}[D_{j}]$. We should also consider the following most complicated case:  $X-\Xi$ has irreducible components of codimension one, whose union is denoted  by $D$, and also has of codimension greater than one, whose union is denoted by $F$.  In this case, we use the same argument of the proof of the second part of Lemma \ref{current spt} (cf. page 143 of \cite{DEL}). The regular part  $D_{reg}$ of $D$ is a complex submanifold of $X-(D_{sing}\cup F)$, where $D_{sing}$ is the singular part of $D$,  and its connected components are
$D_j \cap D_{reg}$. Then we apply the second theorem of support (see page 142 of \cite{DEL}) to get $\alpha_{0}-\beta_{0}=\sum c_{j}[D_{j}]$ on $X-(D_{sing}\cup F)$. Now $\alpha_{0}-\beta_{0}-\sum c_{j}[D_{j}]$ is a $d$-closed current of order $0$ and its support is contained in $D_{sing}\cup F$ of codimension greater than one. So the current $\alpha_{0}-\beta_{0}-\sum c_{j}[D_{j}]$ must vanish by the first part of Lemma \ref{current spt}. Hence, for the last two cases, we should prove $c_j=0$ for any $j$.

Since $\alpha_0$ and $\beta_0$ are real, all
$c_j$'s can be chosen to be real.
If there exists at least one $c_j>0$, we can write this equality as
\begin{equation}\label{current eq}
    \alpha_{0}-\sum c_{j'}[D_{j'}]=\beta_{0}+\sum c_{j''}[D_{j''}]
\end{equation}
with $c_{j'}\leq 0$ and $c_{j''}>0$.
Fix one such $j''$, which we denote as $j_0''$. We take a generic point $x\in D_{j''_0}$, for example,
we can take such a point $x$ with $\nu([D_{j''_0}],x)=1$ and $x\notin X-D_{j''_0}$.
Then taking the Lelong number at the point $x$ on  both sides of (\ref{current eq}), we find
\begin{equation*}\label{lelong eq}
    \nu(\alpha_{0},x)-\sum c_{j'}\nu([D_{j'}],x)=\nu(\beta_{0},x)+\sum c_{j''}\nu([D_{j''}],x).
\end{equation*}
Since $\alpha_{0}$ and $\beta_{0}$ are positive currents with minimal singularities
in nef and big classes, Lemma \ref{lelong num zero} tells us that $\nu(\alpha_{0},x)=0$ and $\nu(\beta_0,x)=0$.
The property of $x$ also implies $\nu([D_{j'}],x)=0$ and $\nu([D_{j''}],x)=0$ for all $j'$ and all $j''\not=j''_0$.
All these force $c_{j_0''}=0$,
which contradicts our assumption that $c_{j_0''}>0$.  Thus we have
\begin{equation*}
    \alpha_{0}-\sum c_{j'}[D_{j'}]=\beta_{0}.
\end{equation*}
By the same argument, we can also prove $c_{j'}=0$. Hence $\alpha_0=\beta_0$ on $X$.
Therefore, we have $[\alpha]=[\beta]$ on $X$.
\end{proof}

The above result is also valid if $X$ is merely in the Fujiki class $\mathcal C$. For a general compact complex manifold, a cohomology class $[\alpha]_{bc} \in H_{BC}^{1,1}(X, \mathbb{R})$ is called nef if for any $\varepsilon >0$, there exists a smooth function $\psi_\varepsilon$ such that $\alpha+i\partial \bar \partial \psi_\varepsilon > -\varepsilon \omega$.
\begin{coro}
Let $X$ be a compact complex $n$-dimensional manifold in the Fujiki
class $\mathcal C$. If $[\alpha]$ and $[\beta]$ are
two nef and big classes, and  $[\alpha^{n-1}]=[\beta^{n-1}]$, then  $[\alpha]=[\beta]$.
\end{coro}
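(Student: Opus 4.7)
The plan is to reduce the statement to the K\"ahler case, Theorem \ref{bignef}, via a proper modification. Since $X$ lies in the Fujiki class $\mathcal{C}$, by definition there exists a proper modification $\mu:\tilde{X}\to X$ with $\tilde{X}$ a compact K\"ahler manifold. I will pull the entire configuration back to $\tilde{X}$, apply Theorem \ref{bignef} there, and then descend to $X$.

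First I would verify that $\tilde{\alpha}:=\mu^{*}\alpha$ and $\tilde{\beta}:=\mu^{*}\beta$ are again nef and big, now on the compact K\"ahler manifold $\tilde{X}$. Nefness is transparent: if $\alpha+i\partial\bar{\partial}\psi_{\varepsilon}>-\varepsilon\omega_{X}$ for a fixed Hermitian metric $\omega_{X}$ on $X$, then since $\mu^{*}\omega_{X}$ is smooth and semipositive on $\tilde X$, compactness yields a constant $C>0$ and a K\"ahler metric $\tilde{\omega}$ on $\tilde{X}$ with $\mu^{*}\omega_{X}\leq C\tilde{\omega}$, so
\[
\tilde{\alpha}+i\partial\bar{\partial}(\psi_{\varepsilon}\circ\mu)>-C\varepsilon\tilde{\omega},
\]
and $\tilde{\alpha}$ is nef. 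For bigness I would invoke the Demailly--Paun criterion on $\tilde{X}$: since $\mu$ is a modification,
\[
\int_{\tilde{X}}\tilde{\alpha}^{n}=\int_{X}\alpha^{n}>0,
\]
the positivity on the right following from the bigness of $[\alpha]$; a nef class with positive top self-intersection on a compact K\"ahler manifold is big. The same argument applies to $\tilde\beta$.

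Next I would pull back the hypothesis. Since $\mu^{*}$ is a ring homomorphism on Bott--Chern cohomology, the equality $[\alpha^{n-1}]=[\beta^{n-1}]$ in $H^{n-1,n-1}_{BC}(X,\mathbb{R})$ yields $[\tilde{\alpha}^{n-1}]=[\tilde{\beta}^{n-1}]$ in $H^{n-1,n-1}_{BC}(\tilde{X},\mathbb{R})$. Theorem \ref{bignef} applied on the compact K\"ahler manifold $\tilde{X}$ then gives
\[
[\tilde{\alpha}]=[\tilde{\beta}]\quad\text{in}\ H^{1,1}_{BC}(\tilde{X},\mathbb{R}).
\]

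Finally I would descend to $X$, which reduces to the injectivity of $\mu^{*}:H^{1,1}_{BC}(X,\mathbb{R})\to H^{1,1}_{BC}(\tilde{X},\mathbb{R})$. Since $X$ lies in the Fujiki class $\mathcal{C}$, the $\partial\bar{\partial}$-lemma holds on $X$ (Deligne--Griffiths--Morgan--Sullivan, Varouchas), so $H^{1,1}_{BC}(X,\mathbb{R})$ is naturally identified with the $(1,1)$-part of the de Rham cohomology, and likewise for $\tilde{X}$. Injectivity of $\mu^{*}$ on $H^{2}(-,\mathbb{Q})$ for a proper modification is the classical consequence of the projection formula $\mu_{*}\mu^{*}=\mathrm{id}$. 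I expect the only place that requires care is the preservation of bigness under pullback, but this is handled cleanly by the volume equality combined with Demailly--Paun; once that is in place, the rest of the reduction is formal.
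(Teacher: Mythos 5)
Your proposal follows exactly the paper's argument: pass to a proper modification $\mu:\bar X\to X$ with $\bar X$ compact K\"ahler, check that $\mu^{*}\alpha$ and $\mu^{*}\beta$ remain nef and big with $[(\mu^{*}\alpha)^{n-1}]=[(\mu^{*}\beta)^{n-1}]$, apply Theorem \ref{bignef} on $\bar X$, and descend via the injectivity of $\mu^{*}$. The only difference is that you spell out the verifications (nefness via comparison of metrics, bigness via the volume equality and Demailly--Paun, descent via the projection formula) which the paper leaves implicit; the approach is the same and correct.
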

\begin{proof}
Since $X$ is in the Fujiki class $\mathcal C$, there exists a proper modification
$\mu:\bar{X}\rightarrow X$ with $\bar{X}$ a compact K\"ahler manifold.
By assumptions on $\alpha$ and $\beta$, $[\mu^{*}\alpha]$ and $[\mu^{*}\beta]$ are also
nef and big classes on $\bar{X}$, satisfying
$$[(\mu^{*}\alpha)^{n-1}]=[(\mu^{*}\beta)^{n-1}].$$
Then by the theorem above, we have
$$[\mu^{*}\alpha]=[\mu^{*}\beta].$$
As $\mu$ is a proper modification, this implies that
 $[\alpha]=[\beta]$ on $X$.
\end{proof}

Note that on a Moishezon manifold, M. Paun (\cite{P98})  proved that
for a holomorphic line bundle $L$, $c_{1}(L)$ is nef
if and only if $L\cdot C\geq 0$ for every irreducible curve $C$. Thus our result yields the following
\begin{coro}
Let $X$ be a compact $n$-dimensional Moishezon manifold. Let $L$ be a big line bundle over $X$ and
$L\cdot C\geq 0$ for every irreducible curve $C$ on $X$. Then $c_1(L)^{n-1}$ determines $c_1(L)$.
\end{coro}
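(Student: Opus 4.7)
The plan is a short composition of the two tools displayed immediately before the statement. Given two big line bundles $L_1, L_2$ on the Moishezon manifold $X$ with $L_i\cdot C\geq 0$ for every irreducible curve $C$ and $c_1(L_1)^{n-1}=c_1(L_2)^{n-1}$, my goal is to reduce to the previous corollary for compact complex manifolds in the Fujiki class $\mathcal C$.

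First I would invoke Paun's theorem (\cite{P98}), cited just above the corollary, to upgrade the numerical positivity assumption $L_i\cdot C\geq 0$ into the analytic statement that $c_1(L_1)$ and $c_1(L_2)$ are nef classes in $H^{1,1}_{BC}(X,\mathbb R)$, in the sense used throughout this section (existence, for every $\varepsilon>0$, of a smooth $\psi_\varepsilon$ with $\alpha+i\partial\bar\partial\psi_\varepsilon>-\varepsilon\omega$ on a chosen modification). Together with the hypothesis that $L_1$ and $L_2$ are big, this puts $[c_1(L_1)]$ and $[c_1(L_2)]$ in the class of cohomology classes to which the preceding corollary applies.

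Second, I would use the classical fact that every Moishezon manifold lies in the Fujiki class $\mathcal C$: by Moishezon's theorem there exists a proper modification $\mu:\bar X\to X$ with $\bar X$ projective, hence K\"ahler. Thus $X$ satisfies the hypothesis of the preceding corollary. Applying that corollary to the nef and big classes $[c_1(L_1)]$ and $[c_1(L_2)]$, the equality of their $(n-1)$-th powers forces $c_1(L_1)=c_1(L_2)$, which is the desired conclusion.

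There is essentially no obstacle here beyond correctly citing Paun's result and the Moishezon/Fujiki relation; the substantive analytic work has already been carried out in Theorem \ref{bignef} and its Fujiki-class corollary, and this corollary is simply their specialization to Moishezon manifolds once the numerical nefness condition has been translated into analytic nefness.
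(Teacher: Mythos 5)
Your proposal matches the paper's intended argument exactly: the paper states this corollary as an immediate consequence of Paun's theorem (converting $L\cdot C\geq 0$ into nefness of $c_1(L)$) combined with the preceding corollary for manifolds in the Fujiki class $\mathcal C$, which applies since every Moishezon manifold admits a projective (hence K\"ahler) modification. The only minor slip is your parenthetical suggesting nefness is tested ``on a chosen modification''---the paper's definition of nef is formulated directly on $X$ with respect to a Hermitian metric there---but this does not affect the correctness of the argument.
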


\section{The image of the boundary of the K\"ahler cone}
Sometimes it is convenient to consider the Aeppli cohomology groups
$V^{p,q}(X,\mathbb{C})$. Since we are interested in the real case,
 we give the following
\begin{defi}
\label{bca} If we denote by $A^{p,q}(X)$ the space of the smooth $\mathbb C$-valued $(p,q)$ forms and by $A^{p,p}_{\mathbb{R}}(X)$ the space of the
smooth $\mathbb {R}$-valued $(p,p)$-forms, then
$$V^{p,p}(X,\mathbb{R})
=\frac{\{\phi\in
A^{p,p}_{\mathbb{R}}(X)|\partial\bar\partial\phi=0\}}{\{\partial{A^{p-1,p}(X)}
+{\bar{\partial}}A^{p,p-1}(X) \}\cap A^{p,p}_{\mathbb R}(X)}.$$
\end{defi}
We denote the space of $(p,q)$-currents by ${D}'^{p,q}(X)$.
Then it is well known that we can also replace $A^{p,q}$ by
$D'^{p,q}$ in the above definition. We denote an element of the
above cohomology groups by $[\cdot]_{a}$.

We need the following lemma due to Bigolin.
\begin{lemm}
\label{apelli} \textup{(}\cite{B69}\textup{)} Let $X$ be a compact
complex $n$-dimensional manifold. The dual space of the $(p,p)$-th Aeppli group is
just the $(n-p,n-p)$-th Bott-Chern group, i.e.,
\begin{eqnarray*}
V^{p,p}(X,{\mathbb{R}}){'}=H^{n-p,n-p}_{BC}(X,{\mathbb{R}}).
\end{eqnarray*}
\end{lemm}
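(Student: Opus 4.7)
The plan is to realize the duality through the integration pairing
$$
\langle [T]_{bc},\,[\phi]_a\rangle := \int_X T\wedge \phi,
$$
where a class in $H^{n-p,n-p}_{BC}(X,\mathbb R)$ is represented by a real $d$-closed $(n-p,n-p)$-current $T$ (permissible, since Bott-Chern cohomology can be computed with currents) and a class in $V^{p,p}(X,\mathbb R)$ by a smooth real $(p,p)$-form $\phi$ with $\partial\bar\partial\phi=0$. I first check well-definedness: replacing $T$ by $T+i\partial\bar\partial S$ contributes $\int_X S\wedge i\partial\bar\partial\phi=0$; replacing $\phi$ by $\phi+\partial\alpha+\bar\partial\beta$ contributes $\int_X T\wedge(\partial\alpha+\bar\partial\beta)$, which vanishes by Stokes once one notes that $dT=0$ together with type considerations forces $\partial T=\bar\partial T=0$. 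This yields a linear map $\Phi: H^{n-p,n-p}_{BC}(X,\mathbb R)\to V^{p,p}(X,\mathbb R)'$.

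To prove $\Phi$ is an isomorphism I would invoke Hodge theory for the fourth-order elliptic Bott-Chern and Aeppli Laplacians $\Delta_{BC}$ and $\Delta_A$ on $A^{p,q}(X)$ (along the lines of Kodaira-Spencer). Both are self-adjoint elliptic with finite-dimensional kernels $\mathcal H^{p,q}_{BC}$ and $\mathcal H^{p,q}_A$, and the associated orthogonal decompositions give natural isomorphisms
$$
\mathcal H^{n-p,n-p}_{BC}\cong H^{n-p,n-p}_{BC}(X,\mathbb R),\qquad \mathcal H^{p,p}_A\cong V^{p,p}(X,\mathbb R).
$$
In particular both cohomologies are finite-dimensional, and by elliptic regularity every Bott-Chern class admits a smooth representative. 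The heart of the argument is then to verify that the Hodge star $*$ restricts to an isomorphism $\mathcal H^{n-p,n-p}_{BC}\to \mathcal H^{p,p}_A$: the identities $\partial^*=-*\bar\partial*$, $\bar\partial^*=-*\partial*$, and $(\partial\bar\partial)^*=\pm *\partial\bar\partial*$ convert the Bott-Chern harmonicity conditions $\partial\phi=\bar\partial\phi=(\partial\bar\partial)^*\phi=0$ into the Aeppli harmonicity conditions $\partial^*(*\phi)=\bar\partial^*(*\phi)=\partial\bar\partial(*\phi)=0$, and since $*$ commutes with complex conjugation the correspondence descends to real harmonic spaces. Injectivity of $\Phi$ follows at once: if the harmonic representative $T$ of $[T]_{bc}$ is annihilated by the pairing, then $0=\int_X T\wedge(*T)=\|T\|^2_{L^2}$ forces $T=0$; equality of finite dimensions then gives surjectivity.

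The main obstacle is purely analytic: setting up the Hodge theory for $\Delta_{BC}$ and $\Delta_A$, verifying ellipticity and the orthogonal decompositions, and checking that the corresponding harmonic spaces genuinely represent the Bott-Chern and Aeppli cohomologies. Once this machinery is in place, the duality is formal, with only mild extra care needed to track the real structure, which is automatic since $\Delta_{BC}$, $\Delta_A$ and $*$ are all defined over $\mathbb R$.
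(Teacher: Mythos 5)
The paper offers no proof of this lemma at all: it is quoted verbatim from Bigolin \cite{B69}, so there is nothing internal to compare against. Judged on its own, your argument is correct and is the standard modern proof of Bott--Chern/Aeppli duality: the integration pairing $\int_X T\wedge\phi$ is well defined (your Stokes computations, using that $dT=0$ splits into $\partial T=\bar\partial T=0$ by type, are right), the fourth-order Laplacians $\Delta_{BC}$ and $\Delta_A$ are self-adjoint elliptic with harmonic spaces representing the two cohomologies, and the Hodge star carries $\mathcal H^{n-p,n-p}_{BC}$ isomorphically onto $\mathcal H^{p,p}_{A}$ via the identities $\partial^*=-*\bar\partial*$, $\bar\partial^*=-*\partial*$, $(\partial\bar\partial)^*=\pm*\partial\bar\partial*$; non-degeneracy then follows from $\int_X T\wedge *T=\|T\|^2$ together with equality of the finite dimensions. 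This is essentially Schweitzer's Hodge-theoretic treatment (the $\Delta_{BC}$ machinery going back to Kodaira--Spencer), whereas Bigolin's original argument is sheaf-theoretic; either route is legitimate, and yours has the advantage of making the perfect pairing completely explicit. The only step you wave at rather than prove is the identification of the current-computed Bott--Chern groups with the smooth-form ones (needed since you take $T$ to be a current); this is standard, via regularization or by comparing both complexes with the same hypercohomology, but it is a genuine lemma and worth a citation rather than just the phrase ``elliptic regularity.''
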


In particular, $V^{p,p}(X,{\mathbb{R}})$ is a finite dimensional
vector space. Furthermore, if $X$ satisfies the
$\partial\bar\partial$-lemma, then $\dim
V^{p,p}(X,{\mathbb{R}})=h^{p,p}$, where $h^{p,p}$ is the Hodge
number. The following lemma is inspired by the method in \cite{MT09} and \cite{Sul76}. In fact,
it is an easy consequence of the Hahn-Banach theorem.
\begin{lemm}
\label{dbclemma} Let $X$ be a compact complex $n$-dimensional
manifold. Suppose that $\Omega_0$ is a real $d$-closed
$(n-1,n-1)$-form satisfying that, for any positive
$\partial\bar\partial$-closed $(1,1)$-current $T$,
$\int_{X}\Omega_0\wedge T\ge 0$ and $\int_{X}\Omega_0\wedge T= 0$ if and
only if $T=0$. Then $[\Omega_0]$ is a balanced class.
\end{lemm}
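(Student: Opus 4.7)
The plan is to argue by contradiction using the geometric Hahn-Banach theorem, along the lines of \cite{Sul76,MT09}. Suppose $[\Omega_0]_{bc}$ is \emph{not} a balanced class. Observe that every element of
$$S := \Omega_0 + i\partial\bar\partial\, A^{n-2,n-2}_{\mathbb{R}}(X)$$
is automatically $d$-closed (since $d \circ i\partial\bar\partial = 0$ on smooth forms), so the failure of $[\Omega_0]_{bc}$ to be balanced is equivalent to $S$ being disjoint from the open convex cone
$$P := \{\Omega\in A^{n-1,n-1}_{\mathbb{R}}(X) : \Omega > 0 \text{ pointwise}\}.$$
Endow $A^{n-1,n-1}_{\mathbb{R}}(X)$ with its usual Fr\'echet topology; then $P$ is open (strict positivity is already a $C^0$-open condition). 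Hahn-Banach then supplies a nonzero continuous linear functional $L$ and a constant $c\in\mathbb{R}$ with $L\leq c$ on $S$ and $L>c$ on $P$.

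Being bounded above on the affine subspace $S$ forces $L$ to be constant on $S$, giving $L(\Omega_0)=c$ and $L(i\partial\bar\partial\phi)=0$ for every real $(n-2,n-2)$-form $\phi$. Since $P$ is an open cone with $0\in\overline{P}$, homogeneity together with $L>c$ on $P$ forces $c\leq 0$ and $L\geq 0$ on $\overline{P}$. By the standard duality between smooth $(p,p)$-forms and $(n-p,n-p)$-currents (underlying Lemma \ref{apelli}), $L$ is represented by a real $(1,1)$-current $T$ via $L(\alpha)=\int_X T\wedge\alpha$. Nonnegativity of $L$ on positive $(n-1,n-1)$-forms means $T$ is a positive $(1,1)$-current, and the vanishing $L(i\partial\bar\partial\phi)=0$ translates, after integration by parts, to $i\partial\bar\partial T = 0$.

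Hence $T$ is a positive $\partial\bar\partial$-closed $(1,1)$-current, so the hypothesis on $\Omega_0$ gives $\int_X \Omega_0\wedge T\geq 0$, i.e., $c\geq 0$. Combined with $c\leq 0$, we obtain $\int_X \Omega_0 \wedge T = 0$, and the hypothesis then forces $T=0$; but this makes $L\equiv 0$, contradicting $L\neq 0$. Consequently $S\cap P\neq\emptyset$, and any $\Omega=\Omega_0+i\partial\bar\partial\phi$ in the intersection is a balanced metric in $[\Omega_0]_{bc}$. The one step requiring care is verifying that the hypotheses of the separation theorem hold ($P$ open convex, $S$ closed convex, disjoint by assumption) and that $L$ indeed corresponds to a $(1,1)$-current of the required kind --- both are immediate, so the argument is essentially bookkeeping once Hahn-Banach is applied.
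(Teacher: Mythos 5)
Your proof is correct, but it runs the Hahn--Banach duality in the opposite direction from the paper's, and the difference is worth recording. The paper separates two sets of \emph{currents}: the closed subspace $D_1$ of $\partial\bar\partial$-closed real $(1,1)$-currents pairing to zero with $\Omega_0$, and the compact convex base $D_2$ of the cone of positive $(1,1)$-currents normalized against a fixed Hermitian $\omega^{n-1}$. Separation then produces a \emph{smooth} $(n-1,n-1)$-form $\Omega$ which is a balanced metric --- but a priori not in the class $[\Omega_0]$ --- so the paper needs a second, rather delicate step: comparing $\ker[\Omega_0]$ and $\ker[\Omega]$ as linear functionals on the finite-dimensional Aeppli group $V^{1,1}(X,\mathbb{R})$, and treating separately the cases $[\Omega]=0$, $[\Omega_0]=c[\Omega]$, and the degenerate case in which no nontrivial positive $\partial\bar\partial$-closed $(1,1)$-current exists. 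You instead separate, in the Fr\'echet space of smooth forms, the affine subspace $S=\Omega_0+i\partial\bar\partial A^{n-2,n-2}_{\mathbb{R}}(X)$ of representatives of $[\Omega_0]_{bc}$ from the open cone $P$ of strictly positive $(n-1,n-1)$-forms; the separating functional is then a positive $\partial\bar\partial$-closed $(1,1)$-current $T$ with $\int_X\Omega_0\wedge T\le 0$, which the hypothesis kills outright. This lands directly on a balanced representative \emph{inside} $[\Omega_0]$ and absorbs all of the paper's endgame, including the degenerate case (there your $T$ is forced to vanish before the pairing with $\Omega_0$ is even invoked). Two small remarks: closedness of $S$ is not needed for the separation theorem you use (one open convex set suffices), which is just as well since it is not quite ``immediate''; and boundedness of $L$ above on the affine subspace only yields $L(\Omega_0)\le c$ rather than $L(\Omega_0)=c$, but the chain $0\le\int_X\Omega_0\wedge T=L(\Omega_0)\le c\le 0$ still closes the argument.
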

\begin{proof}
Fix a Hermitian metric $\omega$ on $X$. We define the following two
subsets of $D'^{1,1}_{\mathbb{R}}(X)$ :
\begin{align*}
D_{1}&=\{T\in D'^{1,1}_{\mathbb{R}}(X)|\partial\bar\partial T=0 , \int_{X}\Omega_0\wedge T=0 \},\\
D_{2}&=\{T\in D'^{1,1}_{\mathbb{R}}(X)|T\ge0 ,
\int_{X}\omega^{n-1}\wedge T=1 \}.
\end{align*}
Then $D_{1}$ is a closed subspace and $D_{2}$ is a compact convex
subset under the weak topology of currents. Since $\Omega_0$ is
$d$-closed,
\begin{equation}\label{0011}
\{\partial\bar {S}
+\bar{\partial} S|S\in D'^{1,0}(X,{\mathbb{C}})\}\subset D_1\ .
\end{equation}
It is clear that
$D_{1}\cap D_{2}$ is empty. By the Hahn-Banach theorem, there exists
a smooth real $(n-1,n-1)$-form $\Omega$ such that
\begin{equation}\label{0010}
\Omega|_{D_{1}}=0\qquad  \textup{and}\qquad  \Omega|_{D_{2}}>0\ .
\end{equation}
The identity in (\ref{0010}) and (\ref{0011})  imply $d\Omega=0$, and the inequality in (\ref{0010})
implies that $\Omega$ is strictly positive. Hence $\Omega$ is a balanced
metric.

On the other hand, Lemma \ref{apelli} says that $[\Omega_0]$ and
$[\Omega]$ are linear functionals on $V^{1,1}(X,\mathbb{R})$. We
have a natural  projective map
\begin{align*}
  \pi:\{T\in D'^{1,1}_{\mathbb{R}}(X)|\partial\bar\partial T=0 \}\to
  V^{1,1}(X,\mathbb{R})
\end{align*}
with $\pi(T)=[T]_a$.
Then the definition of $D_{1}$ implies $\pi(D_{1})=\ker[\Omega_0]$, and
$\Omega|_{D_{1}}=0$ implies $\pi(D_{1})\subseteq \ker[\Omega]$. Thus
we have
\begin{align*}
  \ker[\Omega_0]\subseteq \ker[\Omega]\subseteq
  V^{1,1}(X,\mathbb{R}).
\end{align*}
If $\ker[\Omega]$ is the whole Aeppli group,
then $[\Omega]$=0. Since $X$ is compact, there exists an
$\varepsilon>0$ small enough such that $\Omega+\varepsilon\Omega_0>0$, i.e.,
$[\Omega]+\varepsilon[\Omega_0]=\varepsilon[\Omega_0]$ is balanced.
If $\ker[\Omega]$ is a proper subspace, since
$V^{1,1}(X,\mathbb{R})$ is a finite dimensional vector space, we
must have $\ker[\Omega_0]=\ker[\Omega]$. Hence there exists some
constant $c$ such that $[\Omega_0]=c[\Omega]$. In this case, if there exists some non-trivial positive $\partial \bar \partial$-closed $(1,1)$-current $T$,  the constant $c$ must be positive, and this implies that
$[\Omega_0]$ is balanced. Otherwise, if there is no
non-trivial positive $\partial \bar \partial$-closed $(1,1)$-current, then the zero class satisfies our assumption in the lemma and we can repeat our procedure above. We can use the zero class to define the space $D_1$. Hence the zero class is a balanced class. This means that every class in $H^{n-1,n-1}_{BC}(X, \mathbb{R})$ is balanced.
Thus we
finish the proof of Lemma \ref{dbclemma}.
\end{proof}

\begin{rema}
\label{balanced_dual_rmk}
Let $X$ be a compact balanced manifold. If we denote $\mathcal{E}_{dd^c}\subseteq V^{1,1}(X, \mathbb{R})$ the convex cone generated by $dd^c$-closed positive $(1,1)$-currents, then the above lemma implies $\mathcal{E}_{dd^c}^\vee =\overline{\mathcal{B}}$.
\end{rema}
The above lemma has as corollary the following two interesting propositions. Let $\Omega_0$ be a semi-positive $(n-1,n-1)$-form
on $X$ which is  strictly positive on $X-V$ for a subvariety $V$ of $X$. If $\textup{codim}\ V>1$, we first recall
Theorem 1.1 in \cite{AB92}.
\begin{lemm}\cite{AB92}
Let $X$ be a complex  $n$-dimensional  manifold. Assume $T$ is a $\partial\bar\partial$-closed positive
$(p,p)$-current on $X$
such that the Hausdorff
$2(n-p)$-measure of $\textup{supp}\hspace{0.5mm}  T$ vanishes. Then $T=0$.
\end{lemm}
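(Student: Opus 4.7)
The plan is to reduce the statement to a local question about the trace measure of $T$ and then invoke a density lower bound for $\partial\bar\partial$-closed positive currents to contradict the vanishing of the Hausdorff measure of the support.

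Since the assertion is local, I would first pass to a coordinate ball $U\subset\mathbb{C}^n$ and, multiplying by a smooth cutoff, assume $T$ is compactly supported in $U$. Let $\beta=\tfrac{i}{2}\sum_{k}dz_k\wedge d\bar z_k$ be the standard Hermitian form on $\mathbb{C}^n$. Positivity of $T$ makes
\[
\sigma_T \;:=\; c_{n,p}\,T\wedge\beta^{n-p}
\]
a positive Radon measure, and writing $T=\sum_{|I|=|J|=p}T_{I\bar J}\,i^{p^2}\,dz_I\wedge d\bar z_J$ in standard coordinates, each coefficient $T_{I\bar J}$ is a complex Radon measure absolutely continuous with respect to $\sigma_T$. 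Since $\textup{supp}\,\sigma_T\subseteq\textup{supp}\,T$, it is equivalent to show $\sigma_T=0$.

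The heart of the proof is a density estimate: one shows that for $\sigma_T$-almost every $x\in\textup{supp}\,T$ the lower $2(n-p)$-dimensional density
\[
\Theta^{2(n-p)}_{*}(\sigma_T,x)\;=\;\liminf_{r\to 0^+}\frac{\sigma_T(B(x,r))}{c_{n-p}\,r^{2(n-p)}}
\]
is strictly positive. For $d$-closed positive $(p,p)$-currents this is the classical Lelong-Federer monotonicity, obtained by Stokes using $dT=0$ and the identity $2\beta=i\partial\bar\partial|z|^{2}$. In the $\partial\bar\partial$-closed case, $\partial\bar\partial T=0$ still suffices, but one must integrate by parts twice instead of once, compensating for the missing derivative by a careful choice of plurisubharmonic comparison potentials and by redistributing the two $\partial$, $\bar\partial$ operators onto the potentials rather than onto $T$. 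I expect this reworking of the Lelong-Federer monotonicity under only $\partial\bar\partial T=0$ to be the main obstacle, and it is precisely what Alessandrini and Bassanelli execute in \cite{AB92}.

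With the positive lower density in hand, the standard comparison between Radon measures and Hausdorff measure (Federer, Mattila) provides a constant $C>0$ and the local bound $\sigma_T(E)\leq C\,\mathcal{H}^{2(n-p)}(E\cap\textup{supp}\,T)$ for every Borel set $E\subset U$. Applying this with $E=\textup{supp}\,T$ and using the hypothesis $\mathcal{H}^{2(n-p)}(\textup{supp}\,T)=0$ forces $\sigma_T=0$, and hence all coefficients $T_{I\bar J}$ vanish, giving $T=0$.
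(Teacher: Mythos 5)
The paper does not prove this lemma at all: it is quoted verbatim as Theorem~1.1 of \cite{AB92}, so there is no internal proof to compare with. Judged on its own merits, your outline has the right architecture (localize, pass to the trace measure $\sigma_T$, note that the coefficients $T_{I\bar J}$ are dominated by $\sigma_T$, prove a density estimate via a monotonicity formula adapted to $\partial\bar\partial T=0$, and finish with a Federer-type comparison between $\sigma_T$ and $\mathcal H^{2(n-p)}$), and you correctly identify that the hard point is reworking Lelong's monotonicity when only $\partial\bar\partial T=0$ is available. This is indeed how Alessandrini--Bassanelli proceed.

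However, the central quantitative step is stated in the wrong direction, and this is a genuine gap. What the monotonicity formula gives (in the closed case, and in the $\partial\bar\partial$-closed case after the correction you allude to) is that $r\mapsto \sigma_T(B(x,r))/r^{2(n-p)}$ is essentially non-decreasing, hence an \emph{upper} bound $\sigma_T(B(x,r))\le C\,r^{2(n-p)}$ for $r\le r_0$, locally uniformly in $x$; equivalently $\Theta^{*,2(n-p)}(\sigma_T,x)\le C$. It does \emph{not} give that the lower density is strictly positive $\sigma_T$-a.e.; that claim is already false for $T=\beta^{p}$ with $p\ge 1$, whose trace measure is Lebesgue measure and has zero $2(n-p)$-density everywhere. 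Moreover, even granting a positive lower bound on the lower density, the implication you draw from it, namely $\sigma_T(E)\le C\,\mathcal H^{2(n-p)}(E\cap\operatorname{supp}T)$, is false: the Dirac mass $\delta_0$ has infinite lower $s$-density at the origin for any $s>0$, yet is not dominated by $\mathcal H^{s}$. The comparison theorem you need (Federer/Mattila, or the mass distribution principle) runs the other way: an upper bound $\Theta^{*s}(\mu,x)\le\lambda$ on a Borel set $A$ yields $\mu(A)\le 2^{s}\lambda\,\mathcal H^{s}(A)$, and it is this inequality, combined with the uniform upper mass bound from the $\partial\bar\partial$-monotonicity and the hypothesis $\mathcal H^{2(n-p)}(\operatorname{supp}T)=0$, that forces $\sigma_T=0$ and hence $T=0$. (A minor additional point: multiplying $T$ by a cutoff destroys $\partial\bar\partial$-closedness, so the monotonicity estimate should be carried out locally without truncating the current.)
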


\begin{prop}
\label{dbc prop 1} Let  $X$ be a compact complex $n$-dimensional
manifold. If $\Omega_0$ is a $d$-closed semi-positive $(n-1,n-1)$-form  on $X$
and  is strictly positive outside a subvariety $V$ with
$\textup{codim}\ V>1$, then $[\Omega_0]$ is a balanced class.
\end{prop}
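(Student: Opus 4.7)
The plan is to deduce the proposition directly from Lemma \ref{dbclemma} by verifying that $\Omega_0$ satisfies its two hypotheses: that $\int_X \Omega_0 \wedge T \geq 0$ for every positive $\partial\bar\partial$-closed $(1,1)$-current $T$, and that equality holds only when $T=0$. The first condition is essentially formal, while the second is where the geometric hypothesis on $V$ enters.

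For the first condition, I would observe that since $\Omega_0$ is a smooth semi-positive $(n-1,n-1)$-form and $T$ is a positive $(1,1)$-current, the wedge product $\Omega_0 \wedge T$ is a well-defined positive measure on $X$. In particular, $\int_X \Omega_0 \wedge T \geq 0$, with no further work needed.

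For the second condition, suppose $T$ is a positive $\partial\bar\partial$-closed $(1,1)$-current with $\int_X \Omega_0 \wedge T = 0$. Since $\Omega_0 \wedge T$ is a non-negative measure with total mass zero, it must vanish identically. On the open set $X \setminus V$, the form $\Omega_0$ is strictly positive, so pairing with the positive current $T$ shows that $T$ vanishes on $X \setminus V$; that is, $\textup{supp}\,T \subseteq V$. Now the hypothesis $\textup{codim}\, V > 1$ means $V$ has complex dimension at most $n-2$, hence real Hausdorff $2(n-1)$-measure zero. Applying the Alessandrini--Bassanelli lemma stated just above (with $p=1$) to the $\partial\bar\partial$-closed positive $(1,1)$-current $T$ whose support has vanishing $2(n-1)$-measure, we conclude $T = 0$.

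Both conditions of Lemma \ref{dbclemma} are therefore satisfied, and we conclude that $[\Omega_0]$ is a balanced class. I do not anticipate a serious obstacle here: the whole argument is a short, clean application of the previous two lemmas, and the codimension hypothesis on $V$ is used exactly in matching the dimension count required by the Alessandrini--Bassanelli support theorem. The only point worth flagging is the need to invoke a smooth form $\Omega_0$ (rather than a general current) so that the pointwise strict positivity outside $V$ can be converted into vanishing of $T$ there.
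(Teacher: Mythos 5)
Your proposal is correct and follows essentially the same route as the paper: verify the two hypotheses of Lemma \ref{dbclemma}, deduce $\textup{supp}\,T\subset V$ from strict positivity of $\Omega_0$ off $V$, and then kill $T$ with the Alessandrini--Bassanelli support theorem using the fact that $\textup{codim}\,V>1$ forces the Hausdorff $2(n-1)$-measure of $V$ to vanish. The only difference is that you spell out the intermediate step (the positive measure $\Omega_0\wedge T$ has zero total mass, hence vanishes identically) that the paper leaves implicit.
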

\begin{proof}
 Fix a  $\partial\bar\partial$-closed positive
$(1,1)$-current $T$. Then $\Omega_0\geq 0$ implies that $\int_X\Omega_0\wedge T\geq 0$. And $\Omega_0>0$ on $X-V$
implies that  $\int_X\Omega_0\wedge T=0$ if and only if $\textup{supp}\hspace{0.5mm}  T\subset V$.
Hence according to the above lemma, since $T$ is $\partial\bar\partial$-closed and $\textup{codim}\ V>1$, we have $T=0$.
Thus $\Omega_0$
satisfies the conditions of Lemma \ref{dbclemma} and therefore is in the balanced class.
\end{proof}

If $\textup{codim}\ V=1$ and $\Omega_0$ is a balanced metric, we have $\int_V\Omega_0>0$.
We want to prove that this is also a sufficient
condition when $\Omega_0$ is semi-positive on $X$ and is strictly positive on $X-V$. We need Theorem 1.5 in \cite{AB92}.
\begin{lemm}
\label{ab92lemma} \textup{(\cite{AB92})} Let $X$ be a complex $n$-dimensional
manifold and $E$  a compact analytic subset. Let
$E_{1},\cdots,E_{k}$ be the irreducible $p$-dimensional components of $E$. If $T$ is a positive
$\partial\bar\partial$-closed $(n-p,n-p)$-current such that
$\textup{supp}\hspace{0.5mm} T\subset E$, then there exist constants $c_{j}\ge 0$
such that $T-\sum_{1}^{k}c_{j}[E_{j}]$ is a positive
$\partial\bar\partial$-closed $(n-p,n-p)$-current on $X$, supported  on the
union of the irreducible components of $E$ of dimension greater than
$p$.
\end{lemm}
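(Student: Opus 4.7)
The plan is to establish Lemma \ref{ab92lemma} as a ``second theorem of support'' for positive $\partial\bar\partial$-closed currents: extract an explicit multiple $c_{j}[E_{j}]$ along each $p$-dimensional irreducible component $E_{j}$, and verify that the residue is positive, $\partial\bar\partial$-closed, and supported on the higher-dimensional components only. The first move is to work locally around a generic point. Fix $j$, pick $x_{0}\in E_{j}^{\textup{reg}}$ not meeting $E_{\textup{sing}}$ or any other irreducible component of $E$, and choose holomorphic coordinates $(z_{1},\ldots,z_{n})$ on a small polydisk $U\ni x_{0}$ with $E\cap U=E_{j}\cap U=\{z_{p+1}=\cdots=z_{n}=0\}=:M$. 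Then $T|_{U}$ is a positive $\partial\bar\partial$-closed $(n-p,n-p)$-current supported on the complex submanifold $M$ of dimension $p$, and the classical local structure result (Harvey--Shiffman / \cite{AB92}) writes $T|_{U}=[M]\otimes\mu$ for a positive distribution $\mu$ on $M$.

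The decisive step is to show that $\mu$ is constant. Pushing $\partial\bar\partial T=0$ through the decomposition $T=[M]\otimes\mu$ and unwinding the bidegree calculus in the chosen coordinates, one obtains that $\mu$ is pluriharmonic on $M$. Pluriharmonicity alone is not enough on a noncompact complex manifold; the extra ingredient is that $T$ is positive as a current on the ambient $X$ (not merely on $M$), pairing nonnegatively with strongly positive test $(p,p)$-forms that have components transverse to $M$, and one combines this with a maximum-principle / Liouville argument on the connected complex manifold $E_{j}^{\textup{reg}}$ (connected because $E_{j}$ is irreducible) to conclude $\mu\equiv c_{j}$ for some constant $c_{j}\geq 0$. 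In particular, near every generic point of $E_{j}$ one has $T=c_{j}[E_{j}]$.

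Set $S:=T-\sum_{j}c_{j}[E_{j}]$. Since each $[E_{j}]$ is $d$-closed, $S$ is $\partial\bar\partial$-closed; by the local extraction above, $S$ vanishes in a neighborhood of every generic point of every $E_{j}$, hence $S$ is positive, and its support is contained in the union of $E_{\textup{sing}}$, the pairwise intersections $E_{i}\cap E_{j}$ ($i\neq j$) among the $p$-dimensional components, and the irreducible components of $E$ of dimension different from $p$. All of these pieces except the irreducible components of dimension strictly greater than $p$ have Hausdorff $2p$-measure zero, so by the first support theorem of \cite{AB92} quoted just before the statement of Lemma \ref{ab92lemma} they cannot carry any nonzero positive $\partial\bar\partial$-closed $(n-p,n-p)$-current. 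Therefore $\textup{supp}\hspace{0.5mm} S$ lies in the union of the irreducible components of $E$ of dimension $>p$, as claimed.

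The hard part is the constancy-of-density step. A positive pluriharmonic distribution on a general noncompact complex manifold need not be constant, so the argument rests on combining (i) ambient positivity of $T$ against test forms transverse to $M$, (ii) extension of $\mu$ across $E_{\textup{sing}}$ and the intersection loci, and (iii) boundedness of the extension. Each of these is a separate Skoda--El Mir-type or support-theorem ingredient; once they are in place, the Liouville/maximum-principle step is straightforward, and the remaining work (subtraction and residual-support analysis) is mechanical.
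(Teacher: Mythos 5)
The paper does not prove this lemma: it is quoted verbatim as Theorem 1.5 of \cite{AB92}, so there is no internal proof to compare against. Your reconstruction follows the standard Alessandrini--Bassanelli strategy (local structure theorem $T|_U=[M]\otimes\mu$ near generic points of $E_j$, pluriharmonicity of the density from $\partial\bar\partial T=0$, constancy of the density, subtraction and support analysis of the residue), and that is indeed the architecture of the cited proof.

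The genuine soft spot is the constancy step, and you have mislocated its mechanism. Ambient positivity of $T$ against test forms transverse to $M$ is what kills the normal-derivative terms and yields $T|_U=[M]\otimes\mu$ with $\mu$ a positive distribution in the first place; it contributes nothing further to constancy. What actually forces $\mu\equiv c_j$ is the \emph{compactness} of $E$ (hence of each $E_j$), which sits in the hypotheses but never enters your argument: $\mu$ is a nonnegative pluriharmonic, hence real-analytic, function on $E_j^{\mathrm{reg}}$ minus the other components; one extends it across $E_j^{\mathrm{sing}}$ and the intersection loci (for instance by passing to the normalization $\widetilde{E}_j$, the residual current on those smaller-dimensional sets being controlled by the first support theorem); and then the maximum principle for pluriharmonic functions on the compact connected space $\widetilde{E}_j$ gives constancy. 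A ``Liouville argument on the connected complex manifold $E_j^{\mathrm{reg}}$'', as you phrase it, cannot work: $E_j^{\mathrm{reg}}$ is in general noncompact, and for a noncompact $E$ the statement is simply false (take $T=h\,[M]$ for a nonconstant positive pluriharmonic $h$ on a submanifold $M$). A second, smaller gap: to conclude that $S=T-\sum_j c_j[E_j]$ is positive and that the first support theorem may be applied to it away from the higher-dimensional components, you must check that $S$ does not charge the exceptional set $E^{\mathrm{sing}}\cup\bigcup_{i\neq j}(E_i\cap E_j)$; the observations that $S$ vanishes near generic points of each $E_j$ and that $S=T\ge 0$ off $\bigcup_j E_j$ do not by themselves give positivity across that set, and this is exactly the part of \cite{AB92} that is not ``mechanical''.
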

Then we have
\begin{prop}
\label{dbc prop 2} Let $X$ be a compact complex $n$-dimensional
manifold. If $\Omega_0$ is a  $d$-closed semipositive $(n-1,n-1)$-form  on $X$
such that it is  strictly positive outside a codimension one
subvariety $V$ with irreducible components $E_{1},\cdots,E_{k}$ and
$[\Omega_0]\cdot[E_{j}]>0$ for $j=1,\cdots,k$, then $[\Omega_0]$ is a
balanced class.
\end{prop}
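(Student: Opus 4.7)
The plan is to invoke Lemma \ref{dbclemma}, so the task reduces to verifying that for every positive $\partial\bar\partial$-closed $(1,1)$-current $T$ on $X$, one has $\int_X \Omega_0 \wedge T \ge 0$, with equality if and only if $T = 0$. Nonnegativity of the pairing is immediate from $\Omega_0 \ge 0$ and $T \ge 0$: the wedge $\Omega_0 \wedge T$ is a nonnegative measure.

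Now suppose $\int_X \Omega_0 \wedge T = 0$. Since $\Omega_0 \wedge T$ is a nonnegative measure of total mass zero, it must be the zero measure on $X$. On $X - V$, the form $\Omega_0$ is strictly positive, so locally we may write (by Michelsohn) $\Omega_0 = \omega^{n-1}$ for some Hermitian metric $\omega$, and then $\omega^{n-1}\wedge T = 0$ forces $T = 0$ pointwise on $X - V$. Therefore $\textup{supp}\hspace{0.5mm} T \subset V$.

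Next I apply Lemma \ref{ab92lemma} with $E = V$ and $p = n-1$: since the irreducible components of $V$ have dimension $n-1$ and $V$ is a proper subvariety (so there are no components of dimension greater than $n-1$), there exist constants $c_j \ge 0$ such that
\begin{equation*}
T = \sum_{j=1}^{k} c_j [E_j].
\end{equation*}
Pairing with $\Omega_0$ then yields
\begin{equation*}
0 = \int_X \Omega_0 \wedge T = \sum_{j=1}^{k} c_j \int_X \Omega_0 \wedge [E_j] = \sum_{j=1}^{k} c_j\, [\Omega_0]\cdot[E_j].
\end{equation*}
By hypothesis each $[\Omega_0]\cdot[E_j] > 0$, so every $c_j$ must vanish and hence $T = 0$. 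Together with the first paragraph, this verifies the hypotheses of Lemma \ref{dbclemma}, and we conclude that $[\Omega_0]$ is a balanced class.

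The only delicate point is the step $\textup{supp}\hspace{0.5mm} T \subset V$; once this is established, the argument is a clean combination of the Alessandrini--Bassanelli structure theorem for positive $\partial\bar\partial$-closed currents supported on an analytic set and the numerical positivity hypothesis against each divisorial component. In spirit, this is the natural divisorial analogue of Proposition \ref{dbc prop 1}, where the codimension $>1$ case used the Hausdorff-measure vanishing theorem to kill $T$ outright; here the same strategy produces a divisorial part, which is then killed by the intersection-positivity assumption.
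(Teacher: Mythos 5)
Your proof is correct and follows essentially the same route as the paper: nonnegativity from semipositivity, strict positivity off $V$ forcing $\textup{supp}\hspace{0.5mm} T\subset V$, Lemma \ref{ab92lemma} to write $T=\sum c_j[E_j]$, and the hypothesis $[\Omega_0]\cdot[E_j]>0$ to kill each $c_j$. The only difference is that you spell out the support step in more detail than the paper does.
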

\begin{proof}
Since $\Omega_0$ is a semi-positive form on ${X}$, for
any $\partial\bar\partial$-closed positive $(1,1)$-current ${T}$ on ${X}$,
$\int_{{X}}\Omega_0\wedge{T}\ge 0$, and $\int_{{X}}\Omega_0\wedge{T}=0$
implies  $\textup{supp}\hspace{0.5mm}  T\subset V$. We need to prove $T=0$. By the above lemma, there exist
constants $c_{j}\ge 0$ such that
\begin{eqnarray*}
  T &=& \sum_{j=1}^{k}c_{j}[E_{j}].
\end{eqnarray*}
Hence $[\Omega_0]\cdot T=0$ implies that if  $[\Omega_0]\cdot[E_{j}]>0$, the constants $c_{j}$ must be zero.
 This implies $T=0$. Thus by Lemma \ref{dbclemma}, $[\Omega_0]$ is a balanced class.
\end{proof}

Before we  apply Proposition \ref{dbc prop 1}
to
a nef and big class on a projective Calabi-Yau manifold, we need the following lemma given by Tosatti.
\begin{lemm}
\label{tos07} \textup{(}\cite{T07}\textup{)} Let $X$ be a projective Calabi-Yau
$n$-dimensional manifold and let $[\alpha]\in\partial\mathcal{K}_{NS}$ be a
big class. Then there exists a smooth form $\alpha_0\in[\alpha]$  which
is nonnegative and strictly positive outside a proper subvariety of
$X$.
\end{lemm}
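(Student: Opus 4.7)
The plan is to produce $\alpha_0$ by pulling back a Fubini-Study form via a contraction morphism associated to $[\alpha]$. The starting point is Kawamata's cone theorem for Calabi-Yau manifolds: since $K_X = 0$, the nef cone $\overline{\mathcal{K}}_{NS}$ is locally rational polyhedral in the interior of the big cone. In particular $[\alpha]$ decomposes as a nonnegative real combination
\begin{equation*}
[\alpha] = \sum_{j=1}^N a_j [\alpha_j], \qquad a_j > 0,
\end{equation*}
of rational nef and big classes $[\alpha_j] \in \partial \mathcal{K}_{NS}$.

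For each $[\alpha_j]$, I would invoke the Kawamata-Shokurov base-point-free theorem (applicable because $K_X$ is trivial) to obtain a birational contraction $f_j : X \to Y_j$ onto a normal projective variety, together with a positive integer $m_j$ and a very ample line bundle $L_j$ on $Y_j$ such that $f_j^* L_j = m_j [\alpha_j]$. Composing $f_j$ with the projective embedding $\iota_j : Y_j \hookrightarrow \mathbb{P}^{N_j}$ furnished by $|L_j|$, I set
\begin{equation*}
\omega_j := \tfrac{1}{m_j}(\iota_j \circ f_j)^* \omega_{FS}.
\end{equation*}
This is a smooth nonnegative $(1,1)$-form on $X$ representing $[\alpha_j]$, and it is strictly positive precisely on the open set where $\iota_j \circ f_j$ is an immersion, that is, on the complement of $\textup{Exc}(f_j)$.

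The candidate form is then $\alpha_0 := \sum_j a_j \omega_j$, which is smooth, nonnegative, and represents $[\alpha]$. Its degeneracy locus is contained in $\bigcap_j \textup{Exc}(f_j)$, which is a proper subvariety of $X$ since each $f_j$ is a nontrivial contraction of a boundary class. The principal obstacle is the first step, which depends on the deep minimal model program input for Calabi-Yau manifolds, namely Kawamata's rationality and polyhedrality results together with the base-point-free theorem, to produce the contractions $f_j$ for the rational boundary classes. Once these contractions are in place, the Fubini-Study pullback construction and the verification of the positivity properties are routine.
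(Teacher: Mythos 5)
Your proposal is correct and follows essentially the same route as the paper: local rational polyhedrality of the nef-and-big part of the boundary (Kawamata), density of rational points on the face containing $[\alpha]$, the base-point-free theorem to get semiampleness of the rational classes, and pullback of Fubini--Study forms via the resulting contractions. The only (harmless) difference is that the paper additionally observes that all the rational classes on that face induce the \emph{same} contraction $F_{[\alpha]}$, so the degeneracy locus is exactly $Exc(F_{[\alpha]})$ rather than merely contained in $\bigcap_j Exc(f_j)$ --- an identification that is used later (in the proof of Theorem \ref{nefb}) but is not needed for the lemma itself.
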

For reader's convenience, we present some details on how to  prove the above lemma in \cite{T07}. First assume that $[\alpha] = c_1(L)$ for some holomorphic line bundle $L$, which
means that $[\alpha]$ lies in the space $NS(X)_{\mathbb{Z}}$. Hence, $L$ is nef and big. Now the base point free theorem implies that $L$ is semiample, so there exists some positive integer $k$
such that $kL$ is globally generated. This gives a holomorphic map $$F_{[\alpha]}: X\rightarrow \mathbb{P}(H^0 (X, \mathcal{O}(kL))^*)$$
such that $F_{[\alpha]} ^* \mathcal{O}(1) = kL$. If $[\alpha]\in NS(X)_{\mathbb{Q}}$, then $l[\alpha]\in NS(X)_{\mathbb{Z}}$ for some
positive integer $l$, and we can also define a holomorphic map $F_{[\alpha]}$ similarly.
Finally if $[\alpha]\in NS(X)_{\mathbb{R}}$, then
by Theorem 5.7 in \cite{Kaw88} or Theorem 1.9 in \cite{Kaw97}, we know that the subcone of nef and big classes is locally
rational polyhedral. Hence, $[\alpha]$ lies on a face of this cone which is cut out by
linear equations with rational coefficients. It follows that rational points on this face are dense, and it is then possible to write $[\alpha]$ as a linear combination
of classes in $NS(X)_{\mathbb{Q}}$ which are nef and big, with nonnegative coefficients.
Notice that all of these classes give the same contraction map,
because they lie on the same face. We also denote this map by $F_{[\alpha]}$.
Recall that the exceptional set $Exc(F_{[\alpha]})$ is defined to be the complement of points where $F_{[\alpha]}$ is a local isomorphism.
It is now clear that we can represent $\alpha$ by a smooth nonnegative form which is the pull back of Fubini-Study metric (up to scale). And it is strictly positive outside the exceptional set $Exc(F_{[\alpha]})$.

In birational geometry (cf. \cite{KMM87}), $F_{[\alpha]}$ is called a divisorial contraction if $Exc(F_{[\alpha]})$ is of codimension $1$ and a flipping contraction if the exceptional set $Exc(F_{[\alpha]})$ is of codimension greater than $1$.
We remark that if  $F_{[\alpha]}$ is a divisorial contraction, then the image of $Exc(F_{[\alpha]})$ under $F_{[\alpha]}$ is of dimension less than $n-1$. In our situation, $X$ is smooth, thus under divisorial contractions, its image is $Q$-factorial and has only weak log-terminal singularities (cf. Proposition 5-1-6 of \cite{KMM87}). Thus, its image is $Q$-factorial and normal. Then the image of $Exc(F_{[\alpha]})$ under $F_{[\alpha]}$ has codimension at least $2$ (cf. page 28 of \cite{OD01}). In this case,  $[\alpha^{n-1}]$ cannot be a balanced class.
Indeed, if $E_j$ is any codimension 1 component of $Exc(F_{[\alpha]})$, then we must have
$[\alpha^{n-1}]\cdot[E_j ] = 0$.
Write $Exc(F_{[\alpha]}) = F \cup_j E_j$ where all irreducible
components of $F$ have codimension at least $2$. For a fixed $j$ and for any
$p\in E_j \backslash (F\cup_{l\neq j} E_l)$, let
$S = F_{[\alpha]}^{-1}(F_{[\alpha]}(p))$ be the fiber over $F_{[\alpha]}(p)$.
Since the image of $F_{[\alpha]}$ is a normal variety, Zariski's Main Theorem
shows that all irreducible components of $S$ are positive-dimensional,
so there is at least one such component $S' \subset E_j$ which contains $p$.
Then $\alpha$ is a smooth semipositive form in the class $[\alpha]$
and $\alpha|_{S'}\equiv 0$ since $S'$ is contained in a fiber of $F_{[\alpha]}$ and $\alpha$ is the pull back of Fubini-Study metric.
But this means
that $(\alpha |_{E_j} )^{n-1}(p) = 0$, since $\alpha |_{E_j}$ has zero eigenvalues in all directions
tangent to $S��$. Hence, this is true for all $p$ in a Zariski open subset
of $E_j$. We conclude that $[\alpha^{n-1}]\cdot[E_j ] =
\int_{E_j} (\alpha |_{E_j} )^{n-1}= 0$.
\vspace{2mm}

Now we can prove Theorem \ref{PCY}.
\begin{proof}
By Lemma \ref{tos07}, there exists a semipositive $(1,1)$-form $\alpha_0\in[\alpha]$ such that $\alpha_0$ is
strictly positive outside a subvariety $V$. If $V$ is of codimension greater than one, Proposition \ref{dbc prop 1}
implies that $[\alpha^{n-1}]=[\alpha_0^{n-1}]$ is a balanced metric. If $V$ is of codimension one
with irreducible components
$E_1,\cdots,E_k$, then $[\alpha^{n-1}]\cdot [E_j]=0$ for all $1\leq j\leq k$, thus $[\alpha^{n-1}]\notin\mathcal B$. On the other hand, the converse is obvious.

Next, let's prove $[\alpha^{n-1}]\in \mathcal B$ implies that $[\alpha]$ is a big class. Otherwise, we would have
$\int_X\alpha^n=0$. Since $[\alpha]$ is nef,
there exists a positive current $T\in[\alpha]$. Hence
$$\int_X\alpha^{n-1}\wedge T=\int_X\alpha^n=0.$$
Then $[\alpha^{n-1}]\in \mathcal B$ implies $T=0$. Thus $[\alpha]=[T]=0$. This is  a contradiction.
\end{proof}

We are going to give some examples which show that the holomorphic maps $F_{[\alpha]}$ contract
high codimensional subvarieties to points, so we can
apply Theorem \ref{nefb}. The first one is known as a conifold in the physics literature
\cite{GMS95} (see also \cite{R06}). We learned this from \cite{T07}. Let $X_0$ be a nodal quintic
in $\mathbb P^4$ which has 16 nodal points.
Then a smooth Calabi-Yau manifold $X$ is given by a small resolution $f:X\to X_0$, that is
a birational morphism which is an isomorphism outside the preimages of the nodes, which are 16 rational curves.
Thus we get a contracting map
from $X$ to $\mathbb P^4$. It is easy to see that the pullback of the Fubini-Study metric
is our desired form.

There are also other examples from algebraic geometry (cf. \cite{OD01}, page 24-26).
Let $r$ and $s$ be positive integers, let $E$ be the vector bundle on $\mathbb{P}^{s}$
associated to the locally free sheaf $\mathcal{O}_{\mathbb{P}^{s}}\oplus \mathcal{O}_{\mathbb{P}^{s}}(1)^{r+1}$,
and let $Y_{r,s}$ be the smooth $(r+s+1)$-dimensional variety $\mathbb{P}(E^{*})$. The projection
$\pi:Y_{r,s}\to \mathbb{P}^{s}$ has a section $P_{r,s}$ corresponding to the trivial quotient of $E$.
The linear system $|\mathcal{O}_{Y_{r,s}}(1)|$
is base point free. Hence it induces a holomorphic map:
\begin{eqnarray*}
  C_{r,s}: Y_{r,s}\rightarrow \mathbb{P}^{(r+1)(s+1)}.
\end{eqnarray*}
Moreover, $C_{r,s}$ contracts $P_{r,s}$ to a point and is an immersion on its complement.
And its image is the cone over the Segre embedding of $\mathbb{P}^{r}\times \mathbb{P}^{s}$.

Thus, the pull-back of the Fubini-Study metric of $\mathbb{P}^{(r+1)(s+1)}$ is a
smooth $(1,1)$-form $\alpha=C_{r,s}^{*}\omega_{FS}$.  Clearly $\alpha$ is pointwise
nonnegative on the whole $Y_{r,s}$ and is strictly positive outside $P_{r,s}$ with codimension $r+1$.
Thus $[\alpha^{r+s}]$ is a balanced class on $\mathbb{P}(E^{*})$.
Furthermore, $\int_{P_{r,s}}\alpha^{s}=0$ implies $\alpha\in\partial\mathcal{K}(Y_{r,s})$.

In fact, there are a lot of such examples in the Minimal Model Program, encountered when dealing with contraction
maps of flipping type (\cite{KMM87}).
\vspace{2mm}

The following comment has been formulated by V. Tosatti. In order to produce more examples of birational contraction
morphisms as in Lemma \ref{tos07}, one can take $X$ more generally  to be any smooth
projective variety with $-K_X$ nef. This class includes not only Calabi-Yau but also
Fano manifolds. Under this assumption, if $L$ is any line bundle on $X$ which
is nef and big, then Kawamata's base-point-free theorem again gives us
that $L$ is semi-ample and so there is a birational contraction $F_L$
exactly as in Lemma \ref{tos07}. It also works for
$\mathbb R$-linear combinations of line
bundles (i.e. big classes on the boundary of $\mathcal K_{NS}$), because again the
big points on the boundary of $\mathcal K_{NS}$ are locally rational polyhedral
(if $X$ is Fano, then the whole boundary of $\mathcal K_{NS}$ is rational polyhedral). Thus if $X$ has nef anticanonical bundle, we can still apply Theorem \ref{nefb}.

\section{Characterization theorem on a nef class being K\"ahler}
Using a similar method as in Section 2, we can characterize when a nef class
$[\alpha]$ is K\"ahler under the assumption that
$[\alpha^{n-1}]$ is a balanced class.
\begin{theo}
Let $X$ be a compact $n$-dimensional K\"ahler manifold and $\eta$
a smooth volume form of $X$. Assume that $[\alpha]$ is a nef  class
such that $[\alpha^{n-1}]$ is a balanced class (so $[\alpha]$ is  big). If there exists a
balanced metric $\tilde\omega$ in $[\alpha^{n-1}]$ (i.e., $\tilde\omega^{n-1}\in [\alpha^{n-1}]$) such that
$c_{\tilde\omega}\geq c_{\alpha}$ with $c_{\tilde\omega}=\min_{X}\frac{\tilde\omega^n}{\eta}$
and $c_\alpha=\frac{\int_X\alpha^n}{\int_X\eta}$,
then $[\alpha]$ is a K\"ahler class.
\end{theo}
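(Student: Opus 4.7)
The plan is to adapt the AM-GM approach of Proposition \ref{injective map} and Theorem \ref{bignef}, this time pairing the Monge--Amp\`ere approximations of $[\alpha]$ against the balanced metric $\tilde\omega$. The target is to prove that $\tilde\omega$ is actually K\"ahler; granted this, $[\tilde\omega]$ and $[\alpha]$ are two nef and big classes with $[\tilde\omega^{n-1}]=[\alpha^{n-1}]$, so Theorem \ref{bignef} forces $[\alpha]=[\tilde\omega]\in\mathcal{K}$, completing the proof.

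Fix an auxiliary K\"ahler metric $\omega$ on $X$. For $0<t<1$, Lemma \ref{yau} produces a K\"ahler metric $\beta_t=\alpha+t\omega+i\partial\bar\partial u_t$ with $\beta_t^n=c_{\alpha,t}\eta$, where $c_{\alpha,t}=\int_X(\alpha+t\omega)^n/\int_X\eta$. Applying the AM-GM inequality to $\tilde\omega^{n-1}$ and $\beta_t^{n-1}$ (as in the proof of Proposition \ref{injective map}) and multiplying through by $\beta_t^n$ gives the pointwise bound
\begin{equation*}
(\tilde\omega^n)^{(n-1)/n}(\beta_t^n)^{1/n}\le\beta_t\wedge\tilde\omega^{n-1}.
\end{equation*}
Since $\beta_t$ and $\tilde\omega^{n-1}$ are both $d$-closed, integrating the right-hand side is a purely cohomological pairing, so
\begin{equation*}
c_{\alpha,t}^{1/n}\int_X(\tilde\omega^n/\eta)^{(n-1)/n}\eta\le\int_X\alpha^n+t\int_X\omega\wedge\alpha^{n-1}.
\end{equation*}

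Letting $t\to 0$ gives $\int_X(\tilde\omega^n/\eta)^{(n-1)/n}\eta\le c_\alpha^{(n-1)/n}\int_X\eta$, while the hypothesis $c_{\tilde\omega}\ge c_\alpha$ supplies the pointwise lower bound $\tilde\omega^n/\eta\ge c_\alpha$ and hence the reverse integral inequality. Equality therefore holds and forces $\tilde\omega^n=c_\alpha\eta$ on all of $X$. Consequently the pointwise nonnegative defect $D_t:=\beta_t\wedge\tilde\omega^{n-1}-(\tilde\omega^n)^{(n-1)/n}(\beta_t^n)^{1/n}$ has integral tending to $0$, so $D_t\to 0$ in $L^1(X)$. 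By Lemma \ref{begz10}, extracting a subsequence yields $u_{t_k}\to u_0$ in $C^\infty_{\textup{loc}}(\textup{Amp}(\alpha))$, so $\beta_{t_k}\to\alpha_0:=\alpha+i\partial\bar\partial u_0$ smoothly on $\textup{Amp}(\alpha)$, with $\alpha_0^n=c_\alpha\eta=\tilde\omega^n$ there. Combining the smooth convergence on the ample locus with the $L^1$-vanishing of $D_t$ upgrades matters to the classical identity $\alpha_0\wedge\tilde\omega^{n-1}=\tilde\omega^n$ pointwise on $\textup{Amp}(\alpha)$; together with $\det\alpha_0=\det\tilde\omega$, the equality case of AM-GM applied to the eigenvalues of $\alpha_0$ with respect to $\tilde\omega$ (positive, with sum $n$ and product $1$) forces $\alpha_0=\tilde\omega$ on $\textup{Amp}(\alpha)$.

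Finally, $i\partial\bar\partial u_0=\tilde\omega-\alpha$ is automatically $d$-closed, and $\alpha$ is closed, so $d\tilde\omega=0$ on the Zariski dense open set $\textup{Amp}(\alpha)$; smoothness of $\tilde\omega$ then extends this to all of $X$. Hence $\tilde\omega$ is K\"ahler, and the conclusion follows from Theorem \ref{bignef}. I expect the main obstacle to be the transition from the $L^1$-vanishing of $D_t$ to the classical pointwise identity $\alpha_0\wedge\tilde\omega^{n-1}=\tilde\omega^n$ on $\textup{Amp}(\alpha)$: this relies on combining the BEGZ smooth convergence on the ample locus with the integral vanishing to conclude that the smooth limit of $D_t$ on $\textup{Amp}(\alpha)$ is identically zero.
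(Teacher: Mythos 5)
Your proof is correct and follows the same overall architecture as the paper's: AM--GM paired against the Yau solutions for the classes $[\alpha+t\omega]$, passage to the BEGZ limit $\alpha_0$ on $\textup{Amp}(\alpha)$, the equality case of AM--GM to get $\alpha_0=\tilde\omega$ there, continuity of $d\tilde\omega$ to conclude $d\tilde\omega=0$ on all of $X$, and finally Theorem~\ref{bignef} to pass from $[\tilde\omega^{n-1}]=[\alpha^{n-1}]$ to $[\alpha]=[\tilde\omega]$. The one genuine difference lies in how the key intermediate identity $\tilde\omega^n=c_\alpha\eta$ is obtained. The paper perturbs the balanced metric too, setting $\tilde\omega_t^{n-1}=(\alpha+t\omega)^{n-1}+i\partial\bar\partial\phi$, extracts a weak limit $\mu_0$ of the signed measures $\alpha_t\wedge i\partial\bar\partial\phi_t$, shows $\mu_0\ge 0$ from the hypothesis $c_{\tilde\omega}\ge c_\alpha$ and $\int_X\mu_0=0$ from cohomology, and only then reads off $F_{\tilde\omega}=c_\alpha$. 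You keep $\tilde\omega$ fixed, integrate the pointwise inequality $\beta_t\wedge\tilde\omega^{n-1}\ge(\beta_t^n)^{1/n}(\tilde\omega^n)^{(n-1)/n}$ directly (the right-hand side being a purely cohomological pairing), and squeeze against the pointwise lower bound $\tilde\omega^n/\eta\ge c_{\tilde\omega}\ge c_\alpha$; this is more elementary and dispenses with both the auxiliary family $\tilde\omega_t$ and the weak-compactness-of-measures step. The step you flag as the main obstacle is in fact unproblematic: since $D_{t_k}\ge 0$ on all of $X$ with $\int_X D_{t_k}\to 0$, and $D_{t_k}\to D_0$ locally uniformly on $\textup{Amp}(\alpha)$ by the BEGZ smooth convergence, one gets $\int_K D_0=\lim_k\int_K D_{t_k}\le\lim_k\int_X D_{t_k}=0$ for every compact $K\subset\textup{Amp}(\alpha)$, and $D_0\ge 0$ then forces $D_0\equiv 0$ there.
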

\begin{proof}
Since $\tilde{\omega}^{n-1}\in[\alpha^{n-1}]$, there exists a smooth
$(n-2,n-2)$-form $\phi$ such that
\begin{equation*}
   \tilde{\omega}^{n-1}=\alpha^{n-1}+i\partial\bar{\partial}\phi>0.
\end{equation*}
Fix a K\"ahler metric $\omega$ on $X$. Then for $0<t\ll1$,
$$(\alpha+t\omega)^{n-1}+i\partial\bar{\partial}\phi=
\tilde{\omega}^{n-1}+O(t)>0.$$ Thus there exists a balanced metric
$\tilde\omega_t$ such that
\begin{equation}\label{omegat}
    \tilde{\omega}_{t}^{n-1}=(\alpha+t\omega)^{n-1}+i\partial\bar{\partial}\phi
\end{equation}
and $\tilde\omega_0=\tilde\omega$. Clearly,
as $t\to 0$, $\tilde\omega_t\to \tilde\omega$ in $C^\infty(\Lambda^{1,1}(X))$.
Then if we let $F_{\tilde\omega_t}:=\frac{\tilde\omega_t^n}{\eta}$,
we have
\begin{equation*}\label{lim balance}
    F_{\widetilde{\omega}_{t}}\to F_{\widetilde{\omega}}
\end{equation*}
in $ C^\infty(X)$ as $t\to 0$.

On the other hand, since $[\alpha+t\omega]$ is a K\"ahler class, by Lemma \ref{yau} there exists a family of smooth
functions $u_t$ such that $\alpha+t\omega+i\partial\bar\partial u_t$ is K\"ahler and
\begin{equation*}
(\alpha+t\omega+i\partial\bar\partial u_t)^n=c_t\eta
\end{equation*}
with $c_t=\frac{\int_X(\alpha+t\omega)^n}{\int_X\eta}$.
Moreover, by Lemma \ref{begz10}, there also exists an $\alpha$-psh function $u_0$ such that
\begin{eqnarray*}
\bigl\langle(\alpha+i\partial\bar\partial u_0)^n\bigr\rangle=c_\alpha\eta.
\end{eqnarray*}
Such $u_t$ and $u_0$ satisfy the following relations
\begin{equation*}
\alpha+t\omega+i\partial\bar\partial u_t\to \alpha+i\partial\bar\partial u_0\ \ \ \ \textup{as currents on $X$}
\end{equation*}
and
\begin{equation}\label{ddu}
\alpha+t\omega+i\partial\bar\partial u_t\to \alpha+i\partial\bar\partial u_0\ \
\ \ \textup{in $C^\infty_{\textup{loc}}(\textup{Amp}(\alpha))$}.
\end{equation}
We denote $\alpha_t=\alpha+t\omega+i\partial\bar\partial u_t$ and $\alpha_0=\alpha+i\partial\bar\partial u_0$.
Then from (\ref{omegat}), we have
 \begin{equation}\label{phit}
\tilde\omega_t^{n-1}=\alpha_t^{n-1}+i\partial\bar\partial\phi_t
\end{equation}
for some smooth $(n-2,n-2)$-form $\phi_t$ on $X$.

By the above notations, we have
$$\frac{F_{\widetilde{\omega}_{t}}}{c_{t}}=\frac{\widetilde{\omega}_{t}^{n}}{\alpha_{t}^{n}}.$$
We apply the AM-GM inequality to obtain
\begin{eqnarray*}\label{a g ineq}
\begin{aligned}
   \Bigl(\frac{F_{\tilde{\omega}_{t}}}{c_{t}}\Bigr)^{\frac{n-1}{n}}
   =& \Bigl(\frac{\det(\alpha_{t}^{n-1}+i\partial\bar{\partial}\phi_{t})}{\det\alpha_{t}^{n-1}}\Bigr)^{\frac{1}{n}}\\
   \leq & 1+\frac{1}{n}\sum_{k,l}(\alpha_{t}^{n-1})^{k\bar l}(i\partial\bar{\partial}\phi_{t})_{k\bar l}.
   \end{aligned}
\end{eqnarray*}
Equivalently, we have
\begin{equation}\label{a g ineq form}
   \Bigl(\frac{F_{\widetilde{\omega}_{t}}}{c_{t}}\Bigr)^{\frac{n-1}{n}}\alpha_{t}^{n}
   \leq \alpha_{t}^{n}+\alpha_{t}\wedge i\partial\bar{\partial}\phi_{t}.
\end{equation}

We deal with the second term in the above equality, namely
$$\alpha_{t}\wedge
i\partial\bar{\partial}\phi_{t}=\alpha_{t}\wedge\tilde{\omega}_{t}^{n-1}-\alpha_{t}^{n}.$$
As discussed in the proof of
Theorem \ref{bignef}, there exists a convergent subsequence $\alpha_{t_k}\wedge \tilde\omega_{t_k}^{n-1}$
of measures $\alpha_t
\wedge\tilde\omega_t^{n-1}$ and a convergent sequence $\alpha_{t_k}^{n}$ of measures $\alpha_t^n$.
If we denote their limits by $\mu_{1}$ and
$\mu_{2}$, and denote $\mu_0=\mu_1-\mu_2$, then we have
\begin{equation*}
    \alpha_{t_k}\wedge i\partial\bar{\partial}\phi_{t_k}\to\mu_{0} \ \ \ \ \textup{as currents}.
\end{equation*}
Letting $t=t_k$ in   (\ref{a g ineq form}), integrating with respect to any positive
smooth function, and letting $t_k$ go to
zero, we find that the condition $c_{\tilde\omega}\geq c_0$  implies that $\mu_{0}$ is a
positive measure.

Meanwhile, since \begin{eqnarray*}
\begin{aligned}
    \int_{X}\mu_{0}=&\lim_{t\to 0}\int_{X}\alpha_{t}\wedge\tilde{\omega}_{t}^{n-1}-\alpha_{t}^{n}\\
    =&\int_{X}\alpha\wedge(\tilde{\omega}^{n-1}-\alpha^{n-1}),
    \end{aligned}
\end{eqnarray*}
and as $\alpha$ is nef and $\tilde\omega^{n-1}\in [\alpha^{n-1}]$, we have $\int_X\mu_0=0$.
Thus $\mu_{0}=0$ and  $F_{\tilde\omega}=c_\alpha$ pointwise.

On $\textup{Amp}(\alpha)$, we define a smooth $(1,1)$-form
$$\Psi_0=\lim_{t\to 0}i\partial\bar\partial\phi_t.$$
Then from (\ref{phit}), (\ref{ddu}) and (\ref{omegat}),
we have
\begin{equation*}
\Psi_0=\lim_{t\to 0}(\tilde\omega_t^{n-1}-\alpha_t^{n-1})=\tilde\omega^{n-1}-\alpha_0^{n-1}.
\end{equation*}
Hence by uniqueness of the limit, we have on $\textup{Amp}(\alpha)$
\begin{equation*}\label{trace zero}
   \alpha_{0}\wedge \Psi_0=0.
\end{equation*}
Since $F_{\tilde\omega}=c_\alpha$, this implies that on $\textup{Amp}(\alpha)$,
\begin{equation*}
    1=\Bigl(\frac{\det\tilde{\omega}^{n-1}}{\det\alpha_{0}^{n-1}}\Bigr)^{\frac{1}{n}}\leq 1+
    \frac{1}{n}\sum_{k,l}(\alpha_{0}^{n-1})^{k\bar l}(\Psi_0
)_{k\bar l}=1.
\end{equation*}
Thus $\Psi_0=0$. Therefore
$\tilde{\omega}^{n-1}=\alpha_{0}^{n-1}$ or
$\tilde{\omega}=\alpha_{0}$ on $\textup{Amp}(\alpha)$.

Since $\tilde{\omega}$ is smooth on $X$ and $d\tilde\omega=d\alpha_0=0$
on $\textup{Amp}(\alpha)$, by continuity, $d\tilde\omega=0$ on $X$, i.e.,
$\tilde\omega$ is a K\"ahler metric on $X$.
However, since $[\tilde\omega^{n-1}]=[\alpha^{n-1}]$, by Theorem \ref{inj_thm}, $[\tilde\omega]=[\alpha]$.
Thus $[\alpha]$ is a K\"ahler calss.
\end{proof}

Now we are in a position to conclude the proof of Theorem \ref{chara}.
\begin{proof}
We assume that  there exists a solution $\Omega_{CY}\in[\alpha^{n-1}]$ to equation (\ref{eon})
 for $c\leq (\int_X\alpha^n)^{-1}$.
We write $\Omega_{CY}=\tilde\omega^{n-1}$ and then compute
$$\frac{\tilde\omega^n}{\omega_0^n}=\frac{\parallel\zeta\parallel_{\omega_0}^2}{\parallel\zeta\parallel^2_{\tilde\omega}}
=\frac 1 {\parallel\zeta\parallel_{\Omega_{CY}}}=\frac 1 c \geq \frac{\int_X\alpha^n}{\int_X\omega_0^n}.
$$
Hence we can use the above theorem. Thus  $[\alpha]$ is a K\"ahler class. Now the proof follows from
Theorem \ref{nefb} in
\cite{FWW10}.
\end{proof}

\section{Appendix}
In this appendix, we show that the conjectured cone duality $\mathcal{E}^ \vee =\overline{\mathcal{M}}$ in \cite{BDPP13} implies that the movable cone $\mathcal{M}$ coincides with the balanced cone $\mathcal{B}$.
Let us first recall the definitions of the pseudoeffective cone and the movable cone of  a K\"ahler manifold.
\begin{defi}
\label{cone_def}
Let $X$ be an $n$-dimensional compact K\"ahler manifold.\\
(1) The \textup{pseudoeffective cone} $\mathcal{E}\subset H^{1,1}_{BC}(X, \mathbb{R})$ is defined to be the convex cone generated by all positive $d$-closed $(1,1)$-currents.\\
(2) The \textup{movable cone} $\mathcal{M}\subset H^{n-1,n-1}_{BC}(X, \mathbb{R})$ is defined to be the convex cone generated by all positive $d$-closed $(n-1,n-1)$-currents of the form $\mu_*(\widetilde{\omega}_1 \wedge...\wedge \widetilde{\omega}_{n-1})$, where $\mu$ ranges among all K\"ahler modifications from some $\widetilde{X}$ to $X$ and $\widetilde{\omega}_i$'s are K\"ahler metrics on $\widetilde{X}$.
\end{defi}

In \cite{MT09}, Toma observed that every movable curve on a projective manifold can be represented by a balanced metric under the assumption $\mathcal{E}^ \vee =\overline{\mathcal{M}}$. We observe that Toma's result holds for all movable classes on a compact K\"ahler manifold. Its proof is along the lines of \cite{MT09} and the  arguments go through {\sl mutatis mutandis}.

\begin{theo}
\label{bcone_mcone_thm}
Let $X$ be an $n$-dimensional compact K\"ahler manifold. Then
$\mathcal{E}^ \vee =\overline{\mathcal{M}}$ implies $\mathcal{M}=\mathcal{B}$
\end{theo}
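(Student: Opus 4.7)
The plan is to establish both inclusions $\mathcal{B} \subseteq \mathcal{M}$ and $\mathcal{M} \subseteq \mathcal{B}$. The conjectured cone duality $\mathcal{E}^\vee = \overline{\mathcal{M}}$ enters in the first inclusion through the openness of $\mathcal{B}$; the second inclusion follows from Toma's push-forward argument combined with Lemma \ref{dbclemma}, and in fact holds unconditionally.

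For $\mathcal{B} \subseteq \mathcal{M}$: given $[\Omega] \in \mathcal{B}$ represented by a smooth, strictly positive, $d$-closed $(n-1,n-1)$-form $\Omega$, any positive $d$-closed $(1,1)$-current $T$ pairs nonnegatively with $\Omega$, strictly unless $T = 0$. Hence $[\Omega]$ lies in the interior of $\mathcal{E}^\vee$, which by hypothesis equals $\overline{\mathcal{M}}$. Because $\mathcal{E}$ is full-dimensional and pointed in $H^{1,1}_{BC}(X,\mathbb{R})$ (it contains the K\"ahler cone $\mathcal{K}$), so is its dual $\overline{\mathcal{M}}$. Standard convex analysis then gives $\mathrm{int}(\mathcal{M}) = \mathrm{int}(\overline{\mathcal{M}})$, and therefore $[\Omega] \in \mathcal{M}$.

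For $\mathcal{M} \subseteq \mathcal{B}$: by convexity of $\mathcal{B}$, it suffices to treat a generator $[\gamma] \in \mathcal{M}$ of the form $\gamma = \mu_*(\tilde\omega_1 \wedge \cdots \wedge \tilde\omega_{n-1})$, where $\mu : \tilde X \to X$ is a K\"ahler modification. Use the $\partial\bar\partial$-lemma to pick a smooth $d$-closed representative $\Omega_0$ of the class $[\gamma]$. I plan to verify the hypothesis of Lemma \ref{dbclemma}: for every nonzero positive $\partial\bar\partial$-closed $(1,1)$-current $T$ on $X$, one must show $\int_X \Omega_0 \wedge T > 0$. Since $\Omega_0 - \gamma$ is Bott-Chern exact and $T$ is $\partial\bar\partial$-closed, this integral equals $\int_X \gamma \wedge T$, which by the projection formula becomes $\int_{\tilde X} \tilde\omega_1 \wedge \cdots \wedge \tilde\omega_{n-1} \wedge \mu^* T$. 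The right-hand side is nonnegative because the smooth factor is strongly positive on all of $\tilde X$ and $\mu^* T$ is a positive current; it is strictly positive unless $\mu^* T = 0$, and the latter forces $T = 0$ since $\mu$ is a biholomorphism off a proper analytic subset. Lemma \ref{dbclemma} then yields $[\gamma] \in \mathcal{B}$.

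The main obstacle is making sense of the pullback $\mu^* T$ of a positive $\partial\bar\partial$-closed $(1,1)$-current under a modification, and justifying the projection formula in this generality. For positive $d$-closed currents these are standard in Demailly's theory; for merely $\partial\bar\partial$-closed currents, care is required---in particular one must verify that $\mu^* T$ remains positive and $\partial\bar\partial$-closed on $\tilde X$, and that the integration-by-parts identity $\int_X \mu_*\alpha \wedge T = \int_{\tilde X}\alpha \wedge \mu^* T$ survives when $\alpha$ is smooth. This is the place where Toma's arguments in \cite{MT09} are transplanted \emph{mutatis mutandis}.
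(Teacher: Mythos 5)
Your proposal is correct and, at its core, takes the same route as the paper: $\mathcal{B}\subseteq\mathcal{M}$ because a balanced class pairs strictly positively with $\mathcal{E}\setminus\{0\}$ and hence lies in the interior of $\mathcal{E}^\vee=\overline{\mathcal{M}}$, which coincides with the interior of $\mathcal{M}$ by standard convexity; and $\mathcal{M}\subseteq\mathcal{B}$ by testing the generators $\mu_*(\widetilde{\omega}_1\wedge\cdots\wedge\widetilde{\omega}_{n-1})$ against positive $\partial\bar\partial$-closed $(1,1)$-currents and invoking the Hahn--Banach criterion of Lemma \ref{dbclemma}. The only real difference is organizational: the paper routes the second inclusion through the intermediate claim that $j:\mathcal{E}\to\mathcal{E}_{dd^c}$ is an isomorphism (which is where it formally invokes the hypothesis $\mathcal{E}^\vee=\overline{\mathcal{M}}$) together with Remark \ref{balanced_dual_rmk}, whereas you verify the hypothesis of Lemma \ref{dbclemma} directly on a generator; your observation that this direction is unconditional is accurate, since the underlying computation never uses the conjectured duality. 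The technical obstacle you flag---defining a pullback of a merely $\partial\bar\partial$-closed positive current and justifying the projection formula---is genuine, and it is precisely what the paper resolves by citing Alessandrini--Bassanelli \cite{AB95}: for any modification $\mu:\widetilde{X}\to X$ and any positive $\partial\bar\partial$-closed $(1,1)$-current $T$ there is a unique positive $\partial\bar\partial$-closed current $\widetilde{T}$ on $\widetilde{X}$ with $\mu_*\widetilde{T}=T$ lying in the pulled-back Aeppli class of $T$; since the pairing $\int_X\Omega_0\wedge T$ depends only on the Bott--Chern class of $\Omega_0$ and the Aeppli class of $T$, and the $\widetilde{\omega}_i$ are closed, one gets $\int_X\Omega_0\wedge T=\int_{\widetilde{X}}\widetilde{T}\wedge\widetilde{\omega}_1\wedge\cdots\wedge\widetilde{\omega}_{n-1}\geq 0$, with equality forcing $\widetilde{T}=0$ and hence $T=0$. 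So there is no gap in substance; you only need to supply that citation in place of an ad hoc construction of $\mu^*T$.
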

\begin{proof}
In Remark 3.4, we have proved the cone duality $\mathcal{E}_{dd^c}^ \vee =\overline{\mathcal{B}}$. Hence, we first  prove $\mathcal{E}_{dd^c}^ \vee =
\mathcal{E}^ \vee$.
By the $\partial\bar\partial$-lemma, the natural homomorphism $j: H^{1,1}_{BC}(X, \mathbb{R})\rightarrow V^{1,1}(X, \mathbb{R})$ is actually an isomorphism (see \cite{AB93}). Hence when $j$ is restricted on $\mathcal{E}$ (which is also denoted  by $j$),
 $j: \mathcal{E}\rightarrow \mathcal{E}_{dd^c}$ is injective. We should show that $j$ is also surjective. For any $[T]_{dd^c}\in \mathcal{E}_{dd^c}$ with $T$ positive, there exists some current $S$ such that $d(T+\partial \bar S +\bar \partial S)=0$. We claim that the class $[T+\partial \bar S +\bar \partial S]$ is pseudoeffective, i.e., $[T+\partial \bar S +\bar \partial S]\in  \mathcal{E}$.  We need a result in \cite{AB95}, which states that for any modification $\mu: \widetilde{X}\rightarrow X$ and any positive $dd^c$-closed $(1,1)$-current $T$ on $X$, there exists an unique positive $dd^c$-closed $(1,1)$-current $\widetilde{T}$ on $\widetilde{X}$ such that $\mu_* \widetilde{T}=T$ and $\widetilde{T}\in \mu^* [T]_{dd^c}$. Now, take a smooth $(1,1)$-form $\alpha\in [T+\partial \bar S +\bar \partial S]$ (which will also be a representative of $[T]_{dd^c}$), $\widetilde{T}\in \mu^* [T]_{dd^c}$ implies that there exists some current $\widetilde{S}$ such that $\widetilde{T}= \mu^* \alpha +\partial \overline{\widetilde{S}} +\bar \partial{\widetilde{S}}$. Thus, for any modification $\mu: \widetilde{X}\rightarrow X$ with $\widetilde{X}$ being K\"ahler, we have
\begin{align*}
 \label{int_eq}
 \int_{X} \alpha \wedge \mu_{*}(\widetilde{\omega}_1 \wedge...\wedge \widetilde{\omega}_{n-1})
 &=\int_{\widetilde{X}}\mu^* \alpha\wedge \widetilde{\omega}_1 \wedge...\wedge \widetilde{\omega}_{n-1}\\
 &=\int_{\widetilde{X}}(\mu^* \alpha+\partial \bar{\widetilde{S}} +\bar \partial{\widetilde{S}})\wedge \widetilde{\omega}_1 \wedge...\wedge \widetilde{\omega}_{n-1}\\
 &=\int_{\widetilde{X}}\widetilde{T}\wedge \widetilde{\omega}_1 \wedge...\wedge \widetilde{\omega}_{n-1}\\
 &\geq 0.
\end{align*}
By the
arbitrariness of $\mu$ and $\widetilde{\omega}_i$'s, $\mathcal{E}^ \vee =\overline{\mathcal{M}}$ indicates that $[T+\partial \bar S +\bar \partial S]\in \mathcal{E}$. This confirms the surjectivity of $j: \mathcal{E}\rightarrow \mathcal{E}_{dd^c}$, and hence $j$ is an isomorphism.

Now, it is easy to see that $\mathcal{M}=\mathcal{B}$. On one hand, since any balanced metric takes positive values on $\mathcal{E}\backslash \{0\}$, $\mathcal{B}$ is obviously contained in the interior of $\mathcal{E}^ \vee$, thus $\mathcal{B}\subseteq \mathcal{M}$. On the other hand, $j(\mathcal{E})=\mathcal{E}_{dd^c}$ yields any movable class taking positive values on $\mathcal{E}_{dd^c} \backslash \{0\}$, hence $\mathcal{E}_{dd^c} ^\vee = \overline{\mathcal{B}}$ implies $\mathcal{M} \subseteq \mathcal{B}$. Thus, we obtain $\mathcal{B}=\mathcal{M}$.
\end{proof}

\begin{rema}
\label{bc_sg_rmk}
 In \cite{BDPP13}, the authors have observed that their conjectured cone duality is true for hyper-K\"ahler manifolds or K\"ahler manifolds which are the limits of projective manifolds with maximal Picard number under holomorphic deformations. So in such cases,  $\mathcal B=\mathcal M$ holds.
\end{rema}
Inspired by the above theorem, we naturally propose the following problem concerning the balanced cone of a general compact balanced manifold.
\begin{conj}
\label{cone_conj}
Let $X$ be a compact balanced manifold. Then $\mathcal{E}^ \vee =\overline{\mathcal{B}}$ holds.
\end{conj}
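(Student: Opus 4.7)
The inclusion $\overline{\mathcal B}\subseteq\mathcal E^{\vee}$ is immediate: for a balanced class $[\omega^{n-1}]$ and a positive $d$-closed $(1,1)$-current $T$ representing a class in $\mathcal E$, the pairing $\int_X T\wedge\omega^{n-1}\ge 0$ holds pointwise, and this persists after passage to the closure. The substantive content is the opposite inclusion, which I would attack by a Hahn--Banach contradiction argument anchored on the duality $\mathcal E_{dd^c}^{\vee}=\overline{\mathcal B}$ established in Remark \ref{balanced_dual_rmk}.

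Concretely, suppose $[\Omega_0]\in\mathcal E^{\vee}\setminus\overline{\mathcal B}$. Since $H_{BC}^{n-1,n-1}(X,\mathbb R)$ is finite dimensional and $\overline{\mathcal B}$ is a closed convex cone, a separating linear functional exists, which by Lemma \ref{apelli} is an Aeppli class $[u]_a\in V^{1,1}(X,\mathbb R)$ satisfying $\langle[u]_a,[\Omega]\rangle\ge 0$ on $\overline{\mathcal B}$ and $\langle[u]_a,[\Omega_0]\rangle<0$. Remark \ref{balanced_dual_rmk} together with the bipolar theorem then forces $[u]_a\in\overline{\mathcal E_{dd^c}}$, so there is a sequence of positive $\partial\bar\partial$-closed $(1,1)$-currents $T_k$ with $[T_k]_a\to[u]_a$ in $V^{1,1}(X,\mathbb R)$ and, for $k$ large, $\int_X\Omega_0\wedge T_k<0$. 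Because $\Omega_0$ is $d$-closed this pairing depends only on the Aeppli class of $T_k$; hence if each $T_k$ can be replaced by a positive $d$-closed $(1,1)$-current $T_k'$ in the same Aeppli class, then the Bott--Chern class $[T_k']\in\mathcal E$ would pair negatively with $[\Omega_0]$, contradicting $[\Omega_0]\in\mathcal E^{\vee}$.

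The whole argument thus reduces to the following \emph{key lemma}, which I expect to be the main obstacle: \emph{on a compact balanced manifold, the Aeppli class of every positive $\partial\bar\partial$-closed $(1,1)$-current contains a positive $d$-closed representative}; equivalently, the natural cone map $j:\mathcal E\to\mathcal E_{dd^c}$ is surjective. I would approach it from two sides. The first is modification-theoretic: by Alessandrini--Bassanelli \cite{AB95}, any positive $\partial\bar\partial$-closed $(1,1)$-current lifts uniquely to such a current on any modification $\mu:\widetilde X\to X$; if $\widetilde X$ can be chosen to satisfy the $\partial\bar\partial$-lemma one has $H_{BC}^{1,1}(\widetilde X,\mathbb R)\cong V^{1,1}(\widetilde X,\mathbb R)$, a positive $d$-closed representative can be produced upstairs, and one then pushes it forward. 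The obstacle is that a general balanced manifold need not admit such a modification. The second is analytic: writing $T'=T+\partial\bar S+\bar\partial S$ with $S\in\mathcal D'^{1,0}(X)$, the condition $dT'=0$ becomes, by type, $\partial\bar\partial S=-\partial T$; the right-hand side is automatically $\bar\partial$-closed since $\partial\bar\partial T=0$, and one would try to solve this $\partial\bar\partial$-equation using Green-type operators built from the fixed balanced form $\omega_0^{n-1}$, with the identity $d\omega_0^{n-1}=0$ entering crucially, while simultaneously preserving positivity of $T'$.
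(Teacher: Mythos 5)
First, a point of order: the statement you were asked to prove is Conjecture \ref{cone_conj} of the paper. The authors give no proof of it and explicitly pose it as an open problem, so there is no argument of theirs to compare yours against; your proposal has to stand entirely on its own.

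As a reduction, what you write is sound. The inclusion $\overline{\mathcal B}\subseteq\mathcal E^{\vee}$ is indeed immediate, the pairing between $H^{1,1}_{BC}$ and $H^{n-1,n-1}_{BC}$ is well defined since the representatives involved are $d$-closed, and the separation argument via Lemma \ref{apelli}, the bipolar theorem and the duality $\mathcal E_{dd^c}^{\vee}=\overline{\mathcal B}$ of Remark \ref{balanced_dual_rmk} correctly shows that the conjecture follows from your key lemma, the surjectivity of $j\colon\mathcal E\to\mathcal E_{dd^c}$. The gap is that this key lemma is not a technical hurdle to be cleared afterwards: it \emph{is} the conjecture in only slightly disguised form, and you do not prove it. It is exactly the step the paper itself can carry out only in the K\"ahler case: in the proof of Theorem \ref{bcone_mcone_thm} the surjectivity of $j$ is obtained by combining the $\partial\bar\partial$-lemma (which supplies a $d$-closed representative of the given Aeppli class) with the Alessandrini--Bassanelli lifting result \emph{and} the assumed duality $\mathcal E^{\vee}=\overline{\mathcal M}$ (which is what forces that representative's class to be pseudoeffective). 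On a general compact balanced manifold none of these inputs is available. Your first route fails where you say it does, and worse: even if a modification $\widetilde X$ satisfying the $\partial\bar\partial$-lemma existed, producing a \emph{positive} $d$-closed representative of the lifted class upstairs is again the surjectivity of $j$ on $\widetilde X$, i.e.\ the same problem. Your second route has two independent gaps: the equation $\partial\bar\partial S=-\partial T$ is cohomologically obstructed in general (its solvability is governed by a group of the form $(\ker\bar\partial\cap\operatorname{im}\partial)/\operatorname{im}(\partial\bar\partial)$ in bidegree $(2,1)$, which vanishes on $\partial\bar\partial$-manifolds but need not vanish here), and even where a solution $S$ exists, nothing in a Green-operator construction controls the sign of $T+\partial\bar S+\bar\partial S$ --- and positivity of the corrected current is the entire difficulty. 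In short, the proposal is an honest and correctly executed reformulation of the conjecture, not a proof of it.
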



\begin{thebibliography}{999}
\bibitem{AB92}
L. Alessandrini and G. Bassanelli, {\sl Positive $\partial\bar{\partial}$-closed currents and
non-K\"ahler geometry}, J. Geom. Anal. {\bf 2} (1992), 291-316.

\bibitem{AB93}
L. Alessandrini and G. Bassanelli, {\sl Metric properties of manifolds bimeromorphic to compact
K\"ahler spaces}, J. Diff. Geom. {\bf 37} (1993), 95-121.

\bibitem{AB95}
L.  Alessandrini and G. Bassanelli, {\sl
Modifications of compact balanced manifolds}, C. R. Math. Acad. Sci. Paris {\bf 320} (1995), 1517-1522.

\bibitem{B69}
B. Bigolin, {\sl Gruppi di Aeppli}, Ann. Scuola Norm. Sup. Pisa {\bf 23} (1969), 259-287.

\bibitem{B04}
S. Boucksom, {\sl Divisorial Zariski decompositions on compact complex manifolds},
Ann. Sci. \'{E}cole. Norm. Sup. {\bf 37} (2004), 45-76.

\bibitem{BEGZ}
S. Boucksom, P. Eyssidieux, V. Guedj and A. Zeriahi,
{\sl Monge-Amp\`{e}re equations in big cohomology classes}, Acta Math.
{\bf 205} (2010), 199-262.

\bibitem{BDPP13}
S. Boucksom, J.-P. Demailly, M. Paun, T. Peternell, {\sl The pseudo-effective cone of a compact K\"ahler manifold and varieties of negative Kodaira dimension}, J. Algebraic Geom. {\bf 22} (2013) 201-248.

\bibitem{OD01}
O. Debarre, {\sl Higher-Dimensional Algebraic Geometry}, Springer-Verlag, 2001.

\bibitem{Dem93}
J.-P. Demailly, {\sl A numerical criterion for very ample line
bundles}, J. Diff. Geom. {\bf 37} (1993), 323-374.

\bibitem{DEL}
J.-P. Demailly,  {\sl Complex Analytic and Differential Geometry},
http://www-fourier.ujf-grenoble.fr/~demailly/books.html.

\bibitem{DP04}
J.-P. Demailly and M. Paun, {\sl Numerical characterization of the K\"ahler cone of a compact
K\"ahler manifold}, Ann. Math. {\bf 159} (2004), 1247-1274.


\bibitem{FLY08}
J. Fu, J. Li and S.-T. Yau, {\sl Balanced metrics on non-K\"ahler Calabi-Yau threefolds},
 J. Diff. Geom. {\bf 90} (2012), 81-130.

\bibitem{FWW10}
J. Fu, Z. Wang and D. Wu, {\sl Form-type Calabi-Yau equations}, Math. Res. Lett.
{\bf 17} (2010), 887-903.

\bibitem{FWW11}
J. Fu, Z. Wang and D. Wu, {\sl Form-type Calabi-Yau equations on K\"ahler
manifolds of nonnegative orthogonal bisectional curvature},
arXiv:1010.2022v2.

\bibitem{GH}
A. Gray and L. M. Hervella, {\sl The sixteen classes of
almost Hermitian manifolds and their linear invariants}, Annali Mat.
Pura Appl. {\bf 123} (1980), 35-58.

\bibitem{GMS95}
B. R. Green, D. R. Morrison and A. Strominger, {\sl Black hole condensation and the unification of
string vacua}, Nuclear Phys. B {\bf 451} (1995), 109-120.

\bibitem{Gro90}
M. Gromov, {\sl Convex sets and K\"ahler manifolds, Advances in Differential Geometry and Topology}, World Sci. Publishing, Teaneck, NJ (1990), 1-38.

\bibitem{Kaw88}
Y. Kawamata, {\sl Crepant blowing-up of 3-dimensional canonical singularities and its
application to degenerations of surfaces}, Ann. of Math. {\bf 127} (1988), no. 1, 93-163.

\bibitem{Kaw97}
Y. Kawamata, {\sl On the cone of divisors of Calabi-Yau fiber spaces}, Internat. J. Math. {\bf 8} (1997), no. 5, 665-687.

\bibitem{KMM87}
Y. Kawamata, K. Matsuda and K. Matsuki, {\sl Introduction to the Minimal
Model Problem}, Advanced Studies in Pure Mathematics {\bf 10} (1987),
283-360.

\bibitem{M82}
M. L. Michelsohn, {\sl On the existence of special metrics in complex geometry},
Acta. Math. {\bf 149} (1982), 261-295.

\bibitem{P98}
M. Paun, {\sl Sur l'effectivit\'e num\'erique des images inverses de fibr\'es en droites},
 Math. Ann. {\bf 310} (1998), 411-421.


\bibitem{R06}
M. Rossi, {\sl Geometric transitions}, J. Geom. Phys. {\bf 56} (2006), 1940-1983.

\bibitem{Sul76}
D. Sullivan, {\sl Cycles for the dynamical study of foliated manifolds and complex manifolds}, Inventiones Math. {\bf 36} (1976), 225-255.

\bibitem{MT09}
M. Toma, {\sl A note on the cone of mobile curves},
C. R. Math. Acad. Sci. Paris {\bf 348} (2010), 71-73.

\bibitem{T07}
V. Tosatti, {\sl Limits of Calabi-Yau metrics when the K\"ahler class degenerates},
 J. Eur. Math. Soc. {\bf 11} (2009),  755-776.

\bibitem{TW13}
V. Tosatti and B. Weinkove, {\sl The Monge-Amp\`{e}re equation for (n-1)-plurisubharmonic functions on a compact K\"ahler manifold}, arXiv:1305.7511.

\bibitem{V02}
C. Voisin, {\sl Hodge Theory and Complex Algebraic
Geometry}, I, Cambridge Stud. Adv. Math. {\bf 76}, Cambridge Univ. Press,
2003.


\bibitem{Yau78} S.-T. Yau, {\sl On the Ricci curvature of a compact
K\"ahler manifold and the complex Monge-Amp\`{e}re equation}, I, Comm.
Pure. Appl. Math. {\bf 31} (1978), 339-411.

\end{thebibliography}
\end{document}